\documentclass[12pt]{amsart}
\usepackage[utf8]{inputenc}

\usepackage[margin=1.2in]{geometry}                
\geometry{letterpaper}  
\usepackage{amsmath,amsthm,amssymb}
\usepackage{graphicx}
\usepackage{comment}
\usepackage{epstopdf}
\usepackage{tikz}
\usepackage{xcolor}
\usepackage[colorlinks=true, pdfstartview=FitV, linkcolor=blue, citecolor=blue, urlcolor=blue]{hyperref}
\usepackage[nameinlink]{cleveref}
    \crefname{conj}{conjecture}{conjectures}
    \crefname{algocfline}{algorithm}{algorithms}

\usepackage{pgfplots, multicol, float, mathtools, extarrows}
\usepgfplotslibrary{fillbetween}
\DeclareGraphicsRule{.tif}{png}{.png}{`convert #1 `dirname #1`/`basename #1 .tif`.png}

\definecolor{NordDarkBlack}{HTML}{2E3440}     
\definecolor{NordBlack}{HTML}{3B4252}         
\definecolor{NordMediumBlack}{HTML}{434C5e}   
\definecolor{NordBrightBlack}{HTML}{4C566A}   
\definecolor{NordWhite}{HTML}{E5E9F0}         
\definecolor{NordBrightWhite}{HTML}{ECEFF4}   
\definecolor{NordCyan}{HTML}{8FBCBB}          
\definecolor{NordBrightCyan}{HTML}{88C0D0}    
\definecolor{NordBlue}{HTML}{81A1C1}          
\definecolor{NordBrightBlue}{HTML}{5E81AC}    
\definecolor{NordRed}{HTML}{BF616A}           
\definecolor{NordOrange}{HTML}{D08770}        
\definecolor{NordYellow}{HTML}{EBCB8B}        
\definecolor{NordGreen}{HTML}{A3BE8C}         
\definecolor{NordMagenta}{HTML}{B48EAD}       

\usepackage[linesnumbered,ruled,vlined]{algorithm2e}


\makeatletter  
\@namedef{subjclassname@2020}{%
  \textup{2020} Mathematics Subject Classification}
\makeatother


\newcommand{\N}{{\mathbb N}}
\renewcommand{\P}{{\mathbb P}}
\newcommand{\R}{{\mathbb R}}
\newcommand{\Q}{{\mathbb Q}}
\newcommand{\Z}{{\mathbb Z}}

\newcommand{\ba}{{\mathbf a}}
\newcommand{\bb}{{\mathbf b}}

\newcommand{\be}{{\mathbf e}}

\newcommand{\bt}{{\mathbf t}}
\newcommand{\bx}{{\mathbf x}}
\newcommand{\by}{{\mathbf y}}

\newcommand{\bv}{{\mathbf v}}



\newcommand{\m}{\mathfrak{m}}


\def\Ass{\operatorname{Ass}}

\def\conv{\operatorname{convex\ hull}}

\def\het{\operatorname{ht}}

\def\Max{\operatorname{Max}}

\def\ov{\overline}

\def\Irr{\operatorname{Irr}}

\newcommand{\bight}{\operatorname{big-height}}


\newtheorem{thm}{Theorem}[section]

\newtheorem*{introthm*}{Theorem}
\newtheorem{conj}[thm]{Conjecture}
\newtheorem{cor}[thm]{Corollary}
\newtheorem{lem}[thm]{Lemma}
\newtheorem{prop}[thm]{Proposition}

\theoremstyle{definition}
\newtheorem{defn}[thm]{Definition}

\newtheorem{ex}[thm]{Example}

\theoremstyle{remark}
\newtheorem{rem}[thm]{Remark}
\newtheorem{notation}[thm]{Notation}

\numberwithin{equation}{section}  

\title[Convex bodies and asymptotic invariants]{Convex bodies and asymptotic invariants for powers of monomial ideals}
\author[Polymath 2020, Monomials, Convex Bodies, and Optimization Team]{Jo\~{a}o Camarneiro, Ben Drabkin, Duarte Fragoso, William Frendreiss, Daniel Hoffman, Alexandra Seceleanu, Tingting Tang, Sewon Yang}

\address{Departamento de Matem\'atica, Instituto Superior T\'{e}cnico - Universidade de Lisboa, Av. Rovisco Pais, 1049-001 Lisboa, Portugal}
\email{joao.camarneiro@tecnico.ulisboa.pt}
\address{Information Systems Technology and Design,
Singapore University of Technology and Design, 8 Somapah Road, Building 1, Level 5
Singapore 487372}
\email{benjamin\_drabkin@sutd.edu.sg}
\address{Departamento de Matem\'atica, Instituto Superior T\'{e}cnico - Universidade de Lisboa, Av. Rovisco Pais, 1049-001 Lisboa, Portugal}
\email{duartefragoso@tecnico.ulisboa.pt}
\address{Department of Mathematics, Texas A\&M University, Mailstop 3368, College Station, TX 77843, USA}
\email{wfrendreiss@gmail.com}
\address{Department of Mathematics and Statistics, Washington University in St. Louis, 1 Brookings Drive, St. Louis, MO 63130, USA}
\email{danielhhoffman@gmail.com}
\address{Department of Mathematics, University of Nebraska--Lincoln, 203 Avery Hall, Lincoln, NE 68588, USA}
\email{aseceleanu@unl.edu}
\address{Department of Mathematics and Statistics, San Diego State University, 5500 Campanile Drive, 
San Diego, CA 92182, USA}
\email{ttang2@sdsu.edu}
\address{Department of Mathematics, University of Maryland, William E. Kirwan Hall, 4176 Campus Dr, College Park, MD 20742, USA}
\email{syang132@terpmail.umd.edu}
\keywords{monomial ideals, irreducible decomposition, Newton polyhedron, symbolic powers, linear programming, Waldschmidt constant}
\subjclass[2020]{Primary 13F55, 13F20; Secondary 52B20, 14M25. }

\begin{document}

\begin{abstract}
Continuing a well established tradition of associating convex bodies to monomial ideals,  we initiate a program to construct asymptotic Newton polyhedra from decompositions of monomial ideals. This is achieved by forming a graded family of ideals based on a given decomposition. We term these graded families powers since they generalize the notions of ordinary and symbolic powers. 
Asymptotic invariants for these graded families are expressed as solutions to linear optimization problems on the respective convex bodies. 
This allows to establish a lower bound on the Waldschmidt constant of a monomial ideal by means of a more easily computable invariant, which we introduce under the name of naive Waldschmidt constant.
 \end{abstract}


\maketitle

\section{Introduction}

This paper concerns invariants of monomial ideals which admit interpretations from a convex geometry perspective. Monomial ideals are ideals $I$ that can be generated by monomials in a polynomial ring $R=K[x_1,\ldots, x_n]$ with coefficients in a field $K$.

There is a well established tradition of associating convex bodies to monomial ideals. The preeminent example in this direction is the Newton polyhedron, which is the convex hull of all the exponent vectors of monomials in $I$. Invariants of monomial ideals can be read from their Newton polyhedron: for example, the Hilbert-Samuel multiplicity of an ideal primary to the homogeneous maximal ideal  can be interpreted as the normalized volume of the complement of its Newton polyhedron. For an introduction to the significance of Newton polyhedra in commutative algebra with emphasis on the role they play in integral closure we recommend \cite[\S 1.4, \S 10.3, \S 11]{SH}.

In this paper we focus our attention on associating convex bodies to decompositions of a monomial ideal as an intersection of monomial ideals. Such a decomposition $I=J_1\cap \cdots \cap J_s$ leads to considering on one hand a graded family of monomial ideals 
\begin{equation}
\label{eq:family}
I_m=J_1^m\cap \cdots \cap J_s^m
\end{equation}
 obtained by intersecting the powers of the components in the original decomposition. On the other hand it leads to considering a convex body 
 \[
 \mathcal{C}=NP(J_1)\cap \cdots \cap NP(J_s)
 \]
  obtained by intersecting the Newton polyhedra of the components in the original decomposition. Our first main result shows that $\mathcal{C}$ can be understood as a limit of the Newton polyhedra for the family of ideals $\{I_m\}_{m\geq 1}$, appropriately scaled. For this reason we term $\mathcal{C}$ the asymptotic Newton polyhedron of the family $\{I_m\}$. 
 
 \begin{introthm*}[\Cref{thm:limitshapeintcldecomp}] 
 If $J_1$, \ldots, $J_s$ are monomial ideals and $I_m=J_1^m\cap \cdots \cap J_s^m$, then there  is an equality of polyhedra
\[\mathcal{C}=NP(J_1)\cap \cdots \cap NP(J_s)=\bigcup_{m\geq 1} \frac{1}{m}NP(I_m).\]
\end{introthm*}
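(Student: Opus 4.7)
The plan is to prove both inclusions. The forward direction $\tfrac{1}{m}NP(I_m)\subseteq\mathcal{C}$ for every $m$ follows from $I_m\subseteq J_i^m$: this gives $NP(I_m)\subseteq NP(J_i^m)=m\cdot NP(J_i)$ via the standard identity $NP(K^m)=m\cdot NP(K)$ for monomial ideals (a consequence of $NP(K\cdot L)=NP(K)+NP(L)$ combined with convexity). Scaling by $1/m$ and intersecting over $i$ yields $\tfrac{1}{m}NP(I_m)\subseteq\mathcal{C}$. For the reverse inclusion, the crucial observation is that $\mathcal{C}$, being a finite intersection of rational polyhedra with common recession cone $\R^n_{\geq 0}$, is itself a rational polyhedron with the same recession cone. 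Thus $\mathcal{C}=\conv(\bv_1,\ldots,\bv_l)+\R^n_{\geq 0}$ for finitely many rational vertices $\bv_j$, and since each $\tfrac{1}{m}NP(I_m)$ is convex with recession cone $\R^n_{\geq 0}$, it will suffice to place all vertices simultaneously in a single $\tfrac{1}{M}NP(I_M)$.

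The heart of the proof is the following key lemma: every rational $\bw\in\mathcal{C}$ lies in $\tfrac{1}{q}NP(I_q)$ for some $q$. For each $i$, because $NP(J_i)$ is a rational polyhedron, one may write $\bw=\sum_k\lambda_{i,k}\be_{i,k}+\bv_i$ with $\lambda_{i,k}\in\Q_{\geq 0}$ summing to $1$, $\bv_i\in\Q^n_{\geq 0}$, and $\be_{i,k}$ generator exponents of $J_i$. Choosing $q_i$ to clear all denominators so that $q_i\lambda_{i,k}\in\Z_{\geq 0}$ and $q_i\bv_i\in\Z^n_{\geq 0}$, the identity
\[
x^{q_i\bw}\;=\;\prod_k\bigl(x^{\be_{i,k}}\bigr)^{q_i\lambda_{i,k}}\cdot x^{q_i\bv_i}
\]
realizes $x^{q_i\bw}$ as a product of $q_i$ generators of $J_i$ (with multiplicities) times a polynomial factor, hence $x^{q_i\bw}\in J_i^{q_i}$. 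Setting $q=\lcm(q_1,\ldots,q_s)$ and raising to the $q/q_i$ power, $x^{q\bw}\in J_i^{q}$ for every $i$, so $x^{q\bw}\in I_q$ and $q\bw\in NP(I_q)$.

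With the lemma in hand, one finishes by applying it to each vertex $\bv_j$ to obtain $q_j$ with $\bv_j\in\tfrac{1}{q_j}NP(I_{q_j})$, and using the monotonicity $\tfrac{1}{m'}NP(I_{m'})\subseteq\tfrac{1}{m}NP(I_m)$ whenever $m'\mid m$ (which follows from $(I_{m'})^k\subseteq I_{km'}$, itself a consequence of the super-multiplicativity $I_a\cdot I_b\subseteq I_{a+b}$ of the family) to place all vertices in a common $\tfrac{1}{M}NP(I_M)$ for $M=\lcm(q_1,\ldots,q_l)$. The main obstacle is the key lemma itself: one cannot simply note that $q\bw$ is a lattice point in each $NP(J_i^q)$, since this only puts $x^{q\bw}$ inside $\bigcap_i\overline{J_i^q}$, an intersection that can strictly contain $\overline{I_q}$; the explicit common-denominator factorization above is what actually places $x^{q\bw}$ inside the ideal $I_q$ itself, and rationality of $\bw$ is essential for producing an integral witness.
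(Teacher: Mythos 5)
Your proof is correct and follows essentially the same route as the paper: the forward inclusion from $I_m\subseteq J_i^m$ and linearity of $NP$ under powers, and the reverse inclusion by reducing to the (rational) vertices of $\mathcal{C}$, writing each rational point via a rational Carath\'eodory decomposition in each $NP(J_i)$, clearing denominators to exhibit $\bx^{q\bw}$ explicitly as a product of $q$ generators of $J_i$ times a monomial, and taking an lcm over $i$. Your version is slightly more careful than the paper's in taking the lcm across the $i$ to get a single uniform $q$, and your closing remark correctly flags that membership of $q\bw$ in each $NP(J_i^q)$ alone would only give integral-closure membership, so the explicit factorization is genuinely needed.
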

The idea of associating an asymptotic Newton polyhedron to a graded family of monomial ideals has appeared previously in the context of Okounkov bodies attached to a graded linear series \cite{LM, KK}. To our knowledge, asymptotic Newton polyhedra arising from ideal decompositions have not been studied before.

Our work is motivated by the family of symbolic powers of a monomial ideal. Symbolic powers are a topic of sustained interest from a geometric as well as a combinatorial viewpoint. We recommend \cite{dao2017symbolic}, \cite{GSsurvey} for an introduction to this family of ideals and some combinatorial connections. Symbolic powers of monomial ideals fit in the paradigm of the graded families described in \eqref{eq:family} since they arise by intersecting powers of the components in a coarsening of the primary decomposition of the monomial ideal; see \Cref{lem:HHT}. The convex body $\mathcal{C}$ which corresponds to the graded family of symbolic powers is know as the {\em symbolic polyhedron}. It was introduced in \cite{cooper2017symbolic} and utilized in \cite{bocci2016waldschmidt}. In the study of symbolic powers, convex bodies reach beyond the setting of monomial ideals. Indeed, \cite{Mayes} associates a graded family of monomial ideals termed a generic initial system to the symbolic powers of certain ideals in polynomial rings; see also \cite{Walker} for a similar approach. This suggest that our methods can yield future extensions to arbitrary ideals by following this procedure.

A novel family of monomial ideals, termed irreducible powers, is introduced in this paper. They arise from a decomposition of a monomial ideal into irreducible ideals in the manner described in \eqref{eq:family} and coincide with the symbolic powers in some cases of interest, for example for square-free monomial ideals. The advantage to considering the irreducible powers is that for non square-free monomial ideals they give rise to convex bodies, termed {\em irreducible polyhedra}, which are easier to control than the symbolic polyhedra. 
Our second main result captures the symbolic polyhedron between the two other convex bodies discussed above.
 \begin{introthm*}[\Cref{thm:naivecontainment}] 
 For any monomial ideal $I$ the following containments hold between its Newton (NP), symbolic (SP) and irreducible (IP) polyhedra:
\[NP(I)\subseteq SP(I) \subseteq IP(I).\]
\end{introthm*}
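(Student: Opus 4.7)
The plan is to prove each containment by translating it to an inclusion of the underlying graded families of monomial ideals, then using monotonicity of the Newton polyhedron under inclusion of monomial ideals together with the asymptotic descriptions of both polyhedra supplied by \Cref{thm:limitshapeintcldecomp}.

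For $NP(I)\subseteq SP(I)$, I would start from the standard containment $I^{m}\subseteq I^{(m)}$ of the ordinary in the symbolic power. Monotonicity of $NP$ gives $NP(I^{m})\subseteq NP(I^{(m)})$; combined with the scaling identity $NP(I^{m})=m\cdot NP(I)$ valid for monomial ideals, dividing by $m$ yields $NP(I)\subseteq \tfrac{1}{m}NP(I^{(m)})$. Taking the union over $m\ge 1$ produces $NP(I)\subseteq SP(I)$.

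For $SP(I)\subseteq IP(I)$, I would apply \Cref{thm:limitshapeintcldecomp} to each of the two defining graded families. Using \Cref{lem:HHT} to write the symbolic powers as $I^{(m)}=\bigcap_{i}Q_i^{m}$ for the relevant primary components $Q_i$ of $I$, and the irreducible powers as $\bigcap_{j}L_j^{m}$ over the irredundant irreducible decomposition $I=\bigcap_j L_j$, that theorem identifies $SP(I)=\bigcap_i NP(Q_i)$ and $IP(I)=\bigcap_j NP(L_j)$. The key structural input is that each $Q_i$ factors further as an intersection of those irreducible components of $I$ primary to the same associated prime as $Q_i$, say $Q_i=\bigcap_{j\in S_i} L_j$ for some index set $S_i$. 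Then $Q_i\subseteq L_j$ for every $j\in S_i$, hence by monotonicity $NP(Q_i)\subseteq NP(L_j)$, and therefore $NP(Q_i)\subseteq \bigcap_{j\in S_i} NP(L_j)$. Intersecting over all $i$ collapses the indexing to $\bigcup_i S_i$, which covers every irreducible component, so $SP(I)\subseteq \bigcap_{j} NP(L_j)=IP(I)$.

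The main obstacle I anticipate is verifying the combinatorial compatibility that $\bigcup_i S_i$ really covers every index $j$, that is, that the full irreducible decomposition refines the primary decomposition produced by \Cref{lem:HHT}. This is the point at which embedded components must be properly accounted for, and the choice of coarsening in \Cref{lem:HHT} is essential. Once this refinement is in place, the second containment reduces to the single application of monotonicity of $NP$ sketched above, in complete parallel with the first.
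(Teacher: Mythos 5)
Your overall strategy for $SP(I)\subseteq IP(I)$ --- exhibit the irredundant irreducible decomposition as a refinement of the combined primary decomposition and push this through monotonicity of the Newton polyhedron --- is exactly the paper's; the paper simply isolates the monotonicity step as \Cref{lem:refined}. However, your description of the index sets $S_i$ is off in a way that matters. The ideal $Q_{\subseteq P}$ is in general \emph{not} primary, and the irreducible components entering into it are not the ones ``primary to the same associated prime as $Q_i$''; by \Cref{rem:uniquecombined} (or \Cref{lem:HHT}), one has $Q_{\subseteq P}=\bigcap_{\sqrt{J_j}\subseteq P}J_j$, so $S_P=\{j:\sqrt{J_j}\subseteq P\}$ --- the condition is \emph{containment} of the radical in $P$, not equality. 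With your version of $S_i$, the coverage $\bigcup_i S_i=\{1,\dots,s\}$ that you rightly flag as the main obstacle genuinely fails whenever $I$ has an irredundant irreducible component whose radical is not maximal in $\Ass(I)$. With the corrected $S_P$, coverage follows because each $\sqrt{J_j}$ lies in the finite poset $\Ass(I)$ and hence below some element of $\Max(I)$; the paper makes this observation explicitly, and you must too, since as written the gap is real.

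For $NP(I)\subseteq SP(I)$, your route via $I^m\subseteq I^{(m)}$, the scaling identity, and \Cref{cor:SPlimiting} is valid but imports \Cref{thm:limitshapeintcldecomp} unnecessarily. The paper gets this directly from \Cref{lem:refined} applied to the trivial decomposition $I=I$ and its refinement $I=\bigcap_{P\in\Max(I)}Q_{\subseteq P}$: since $I\subseteq Q_{\subseteq P}$ for each $P$, one has $NP(I)\subseteq\bigcap_P NP(Q_{\subseteq P})=SP(I)$. Likewise you do not need \Cref{thm:limitshapeintcldecomp} to identify $SP(I)$ and $IP(I)$ as intersections of Newton polyhedra; those intersections are their definitions (\Cref{def:symbolicpolyhedron}, \Cref{def:irredpolyhedron}), and the asymptotic description is a consequence, not a prerequisite.
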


We  show that certain asymptotic invariants for graded families of ideals can be read off the respective asymptotic Newton polyhedra by means of linear optimization. These invariants generalize the notion of initial degree of a homogeneous ideal, by which we mean the least degree of a nonzero element of the ideal, to an asymptotic counterpart. For symbolic powers, the resulting asymptotic invariant is known in the literature as the {\em Waldschmidt constant}. It has been investigated in many works, among which we cite \cite{Skoda, Waldschmidt, HaHu, BoH} and specifically for the case of monomial ideals in \cite{cooper2017symbolic, bocci2016waldschmidt}.  In \cite{Nagata59} Nagata established, in different language, that the Waldschmidt constant for any set of $r\geq 9$ very general points in $\P^2$ is $\sqrt{r}$ if $r$ is a perfect square and conjectured this remains true for arbitrary $r\geq 9$. Since then a considerable amount of effort has gone towards providing bounds for Waldschmidt constants in the setting of ideals defining reduced sets of points in projective space; see \cite{EsnaultViehweg, FMXChudnovsky, MTGChudnovsky, ChudnovskyGeneralPoints}. 

In this paper we introduce an analogous invariant, termed {\em naive Waldschmidt constant}, which can be interpreted as the solution of a linear optimization problem on the irreducible polyhedron and which gives an intrinsic lower bound on the Waldschmidt constant. We view the naive Waldschmidt constant as an invariant that is more amenable to computations than the Waldschmidt constant yet it furnishes strong bounds on the latter.
We obtain lower bounds on the naive Waldschmidt constant reminiscent of a Chudnovsky-type inequality conjectured in \cite[Conjecture 6.6]{cooper2017symbolic}. Our results in this direction are summarized by the  inequalities in the following result; the first two inequalities reflect the containments in the previous theorem and the last two inequalities are tight by \Cref{cor:tight}.
\begin{introthm*}[\Cref{LowerBoundThm}] 
Let $I\subseteq K[x_1,\dots,x_n]$ be a monomial ideal with initial degree $\alpha(I)=d$. If $d-1\equiv k\mod(n)$, $0\leq k<n$, then the following inequalities are satisfied by the Waldshchmidt constant  $\widehat{\alpha}(I)$ and the naive Walshchmidt constant  $\widetilde{\alpha}(I)$ 
$$ 
\alpha(I)\geq \widehat{\alpha}(I)
\geq \widetilde{\alpha}(I)
\geq\frac{(n+d-1-k)(2n+d-1-k)}{n(2n+d-1-2k)} \geq \left \lfloor \frac{\alpha(I)+n-1}{n}\right \rfloor.
$$

\end{introthm*}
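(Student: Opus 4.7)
My plan is to chain the four inequalities, each of which is straightforward except for the main bound in the middle. The outer inequality $\alpha(I)\ge \widehat{\alpha}(I)$ follows from Fekete's lemma applied to the subadditive sequence $m\mapsto \alpha(I^{(m)})$, with $I^{(1)}=I$ contributing the value $\alpha(I)$ to the infimum defining $\widehat{\alpha}(I)$. The next step $\widehat{\alpha}(I)\ge \widetilde{\alpha}(I)$ is immediate from $SP(I)\subseteq IP(I)$ of \Cref{thm:naivecontainment}, since minimizing the linear functional $\sum x_i$ over a larger convex body can only produce a smaller value.

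For the main bound my plan is to reduce to the case $I=\m^d$ and compute explicitly. Specifically, I would first establish the polyhedral containment $IP(I)\subseteq IP(\m^d)$, which yields $\widetilde{\alpha}(I)\ge \widetilde{\alpha}(\m^d)$. Using the classical irreducible decomposition
\[
\m^d=\bigcap_{\alpha\in \Z_{\ge 0}^n,\ |\alpha|=d-1}(x_1^{\alpha_1+1},\ldots,x_n^{\alpha_n+1}),
\]
the containment reduces to verifying $\sum_i y_i/(\alpha_i+1)\ge 1$ for every $y\in IP(I)$ and every such $\alpha$. The crucial observation is that $x^\alpha\notin I$, since $|\alpha|=d-1<\alpha(I)$; hence $x^\alpha\notin Q$ for some irreducible component $Q=(x_{i_1}^{a_1},\ldots,x_{i_r}^{a_r})$ of $I$. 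The divisibility criterion for monomial membership in an irreducible monomial ideal forces $a_l\ge \alpha_{i_l}+1$ for every $l$; combining this with $y\in NP(Q)$, i.e.\ $\sum_l y_{i_l}/a_l\ge 1$, and padding with nonnegative terms indexed by variables outside $\supp(Q)$ yields the desired constraint indexed by $\alpha$ in $IP(\m^d)$.

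With the containment in hand I would compute $\widetilde{\alpha}(\m^d)$ as a symmetric linear program. Averaging any feasible $x$ over the $S_n$-action on the coordinates shows that an optimum is attained at a constant vector $(t,\ldots,t)$, where $t$ is the reciprocal of the smallest value of $\sum_i 1/(\alpha_i+1)$ over the allowed $\alpha$. Since $1/(x+1)$ is convex, this sum is Schur-convex in $\alpha$ and is minimized over integer vectors with $|\alpha|=d-1$ at the most balanced point, namely $k$ coordinates equal to $q+1$ and $n-k$ coordinates equal to $q$, where $d-1=qn+k$. The resulting sum equals $[n(q+2)-k]/[(q+1)(q+2)]$, giving $\widetilde{\alpha}(\m^d)=n(q+1)(q+2)/[n(q+2)-k]$, which coincides with the middle expression $(n+d-1-k)(2n+d-1-k)/[n(2n+d-1-2k)]$ in the theorem. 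The final inequality collapses to $n(q+2)\ge n(q+2)-k$, since $\lfloor (d+n-1)/n\rfloor=q+1$. The main obstacle is the containment $IP(I)\subseteq IP(\m^d)$: it requires the careful choice of a witnessing irreducible component of $I$ for each forbidden exponent $\alpha$ of degree $d-1$, together with the numerical trick that upgrades a Newton-polyhedral constraint of $I$ into the stronger-looking constraint of $\m^d$; the remaining steps are a symmetric LP and a majorization computation.
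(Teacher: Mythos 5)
Your proposal is correct and follows the same overall strategy as the paper: reduce the middle inequality to the computation of $\widetilde{\alpha}(\mathfrak{m}_n^d)$ via the polyhedral containment $IP(I)\subseteq IP(\mathfrak{m}_n^d)$, then solve the resulting symmetric linear program. The differences are in how these two steps are executed. For the containment, you argue directly: for each exponent $\alpha$ with $|\alpha|=d-1$ you choose an irreducible component $Q$ of $I$ with $\bx^\alpha\notin Q$, extract the coordinatewise bound $a_l\ge\alpha_{i_l}+1$, and turn the bounding inequality of $NP(Q)$ into that of $NP(x_1^{\alpha_1+1},\ldots,x_n^{\alpha_n+1})$ by monotonicity of reciprocals. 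The paper instead proves a more general statement (\Cref{lem:homomaxhasleastalpha}): it introduces the poset $\Irr(\cdot)$ of irreducible ideals containing a given ideal and shows that any containment $I\subseteq I'$ induces $IP(I)\subseteq IP(I')$ by matching minimal elements of the two posets. Your version is more self-contained and computational; the paper's is an abstraction that applies to any $I\subseteq I'$, not just $I'=\mathfrak{m}_n^d$. For the LP, you symmetrize first (averaging over the $S_n$-action) and then minimize $\sum_i 1/a_i$ over $P_n(d+n-1)$ via Schur-convexity of $x\mapsto 1/x$, whereas the paper derives the lower bound $\beta$ by summing the ${n\choose k}$ balanced-partition constraints and separately verifies feasibility of $(\beta/n,\ldots,\beta/n)$ via \Cref{cor:recipineq} and \Cref{recipsum}; these are two presentations of the same AM-HM / majorization argument. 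One small aside: for the outer inequality you correctly invoke Fekete's lemma for subadditive sequences, while the paper's \Cref{rem:infimum} attributes it to Farkas; the paper also gets $\alpha(I)\ge\widehat{\alpha}(I)$ directly from $NP(I)\subseteq SP(I)$ in \Cref{prop:ineq}, but both derivations are fine.
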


Our paper is organized as follows: in section 2 we discuss notions of powers arising from decompositions of monomial ideals, in section 3 we associate asymptotic Newton polyhedra to the families introduced previously, in section 4 we define the asymptotic initial degrees for our graded families, we express these invariants by means of linear optimization, and we derive  bounds on their values.

\section{Decompositions of monomial ideals and notions of powers}

Let $\N$ denote the set of nonnegative integers. For all vectors $\ba=(a_1,\ldots,a_n)\in \N^n$ we use the shorthand notation $\bx^\ba:=x_1^{a_1}\cdots x_n^{a_n}$ and thus any monomial ideal is described by a finite set of vectors
$\ba_1,\ldots, \ba_\ell\in \N^n$ as 
$
I=(\bx^{\ba_1},\ldots, \bx^{\ba_n}).
$

An ideal $J$ is called {\em irreducible} if whenever there is a decomposition $J=J_1\cap J_2$, with $J_1, J_2$ ideals, then $J=J_1$ or $J=J_2$. 
An {\em irreducible decomposition} of an ideal $I$ is an expression $I=J_1\cap J_2 \cap \cdots \cap J_s$ where $J_i$ are irreducible ideals for all $1\leq i\leq s$. Such a decomposition is called {\em irredundant} if none of the $J_i$ can be omitted from this expression. Emmy Noether showed in \cite{Noether} that every ideal $I$ in a noetherian ring admits an irredundant irreducible decomposition. Moreover, although the number of components in any irredundant irreducible decomposition of $I$ is the same, the components themselves are in general not unique. 

A monomial ideal $J$ is irreducible if and only if it is generated by pure powers for a subset of the variables, i.e., $J=(x_{i_1}^{a_1}, \cdots, x_{i_t}^{a_t})$. By contrast to arbitrary ideals, monomial ideals have a unique irredundant decomposition into irreducible monomial ideals up to permutation of the components; see \cite[Theorem 3.3.9]{MRSW} for a proof. The ideals appearing in a monomial  irredundant irreducible  decomposition of a monomial ideal $I$ can also be characterized as the smallest irreducible monomial ideals which contain $I$. See \Cref{def:irr} and the considerations following it for this perspective.

Irreducible decompositions are special cases of primary decompositions. Both for monomial and for arbitrary ideals they possess the advantage of being much more easily computable in an algorithmic fashion; see \cite[\S 5.2]{MillerSturmfels} and \cite{FortunaGianniTrager}. In the case of square-free  monomial ideals and more generally for radical ideals, the irredundant irreducible decomposition and the irredundant primary decomposition coincide.

Symbolic powers of ideals arise from the theory of primary decomposition. For an ideal $I$, the symbolic powers retain only the components of the ordinary powers whose radicals are contained in some associated prime of $I$. When $I$ is a radical ideal and $K$ is a field of characteristic 0, the $m$-th symbolic power of $I$ encodes the polynomial functions vanishing to order at least $m$ on the variety cut out by $I$. 

\begin{defn}
\label{def:symbolicpower}
Let $R$ be a noetherian ring and $I$ an ideal in $R$.
The {\em $m$-th symbolic power} of $I$ is the ideal 
$$I^{(m)} = \bigcap_{P \in \Ass(R/I)} I^m R_p \cap R.$$
\end{defn}

Recall that the set of associated primes, denoted $\Ass(R/I)$, of an ideal $I$ in a Noetherian ring is finite. We view it as a poset with respect to containment. A minimal element of this poset is called a minimal prime of $I$ and the non minimal elements are called embedded primes. 

The symbolic powers of monomial ideals admit an alternate description which is even more closely related to their primary decomposition.

\begin{lem}[{\cite[Lemma 3.1]{HHT}}, {\cite[Theorem 3.7]{cooper2017symbolic}}]
\label{lem:HHT}
If $I$ is a monomial ideal with monomial primary decomposition $I=Q_1\cap Q_2\cap \cdots\cap Q_s$, set $\Max(I)$ to denote the set of maximal elements in the poset of associated primes of $I$ and for each $P\in \Max(I)$ denote 
\[Q_{\subseteq P}=\bigcap_{\sqrt{Q_i}\subseteq P} Q_i.\] 

Then the symbolic powers of $I$ can be expressed as follows
\[
I^{(m)}=\bigcap_{P\in \Max(I)}(Q_{\subseteq P})^m.
\]
\end{lem}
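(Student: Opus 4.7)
The plan is to start from the definition $I^{(m)}=\bigcap_{P\in\Ass(R/I)} I^m R_P\cap R$ and reshape the right-hand side into the stated formula in three steps: restrict the intersection to $\Max(I)$, compute $I^m R_P$ from the primary decomposition, and contract back to $R$.

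For the first step, note that for associated primes $P_1\subseteq P_2$ one has $R_{P_2}\subseteq R_{P_1}$ as subrings of the total ring of fractions, and hence $I^m R_{P_2}\cap R\subseteq I^m R_{P_1}\cap R$: any element $s\notin P_2$ that clears a given $f$ into $I^m$ also lies outside $P_1$. Thus the terms in the intersection indexed by non-maximal associated primes are redundant and
\[ I^{(m)}=\bigcap_{P\in\Max(I)} I^m R_P\cap R. \]

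Second, I would localize the primary decomposition $I=Q_1\cap\cdots\cap Q_s$ at a fixed $P\in\Max(I)$. Since localization commutes with finite intersections, and $Q_i R_P=R_P$ whenever $\sqrt{Q_i}\not\subseteq P$ (for any such $i$, some pure power of a variable outside $P$ lies in $Q_i$ and becomes a unit in $R_P$), this yields $IR_P=(Q_{\subseteq P})R_P$ and consequently $I^m R_P=(Q_{\subseteq P})^m R_P$.

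The final and most substantial step is the contraction identity $(Q_{\subseteq P})^m R_P\cap R=(Q_{\subseteq P})^m$, which reduces to showing that every associated prime of $R/(Q_{\subseteq P})^m$ is contained in $P$, so that $R\setminus P$ consists of non-zero-divisors modulo $(Q_{\subseteq P})^m$. Writing $P=(x_i:i\in\sigma)$, each irreducible monomial component $Q_i$ with $\sqrt{Q_i}\subseteq P$ is generated by pure powers of variables indexed by a subset of $\sigma$. Hence $Q_{\subseteq P}$, and a fortiori any power of it, is extended from the polynomial subring $K[x_i:i\in\sigma]$, and its associated primes in $R$ are monomial primes supported on the variables of $P$, in particular contained in $P$. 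The principal obstacle is concentrated in this last step: powers of monomial ideals can acquire embedded primes not associated to the base ideal, and one must verify that none of these new primes escape $P$; the extension-from-a-subring structure of $Q_{\subseteq P}$ is precisely what rules this out.
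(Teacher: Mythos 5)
The paper does not give its own argument for this lemma; it is cited from \cite{HHT} and \cite{cooper2017symbolic}, so there is no in-text proof to compare against. Your proposal is essentially correct and follows the natural route: (1) non-maximal associated primes are redundant in the localization definition of $I^{(m)}$; (2) localizing the primary decomposition at a fixed $P\in\Max(I)$ gives $I^m R_P=(Q_{\subseteq P})^m R_P$; (3) the contraction $(Q_{\subseteq P})^m R_P\cap R=(Q_{\subseteq P})^m$ holds because $\Ass(R/(Q_{\subseteq P})^m)$ is confined inside $P$. One inaccuracy in Step 3 deserves a fix: you call the $Q_i$ ``irreducible'' and claim they are ``generated by pure powers of variables.'' In the hypothesis of the lemma the $Q_i$ are only primary, and a monomial primary ideal need not be generated by pure powers (for instance $(x^2,xy,y^2)$ is $(x,y)$-primary but not of that form). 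What is true, and what your argument actually requires, is the weaker fact that a monomial $\mathfrak{p}$-primary ideal is generated by monomials supported on the variables of $\mathfrak{p}$; granting this, $Q_{\subseteq P}$ and all of its powers are still extended from the subring $K[x_i:i\in\sigma]$, the associated primes remain confined to $P$, and your contraction step goes through unchanged.
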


\begin{rem}
\label{rem:uniquecombined}
The above Lemma employs the decomposition $I=\bigcap_{P\in \Max(I)}Q_{\subseteq P}$. We will call this a {\em combined primary decomposition} for $I$. The ideals $Q_{\subseteq P}$ are uniquely determined by $I$ and $P$ and are independent of the primary decomposition in the statement of \Cref{lem:HHT}. This follows from the identity  $Q_{\subseteq P}=IR_{P'}\cap R$, where $P'$ is the prime monomial ideal generated by the variables of $R$ that are not in $P$.
\end{rem}

 \begin{ex}
 \label{ex:noemb}
 If $I$ is a monomial ideal with no embedded primes and  $I=Q_1\cap \cdots\cap Q_s$ is a minimal primary decomposition, then the symbolic powers of $I$ are given for all integers $m\geq 1$ by 
 \[ I^{(m)}=Q_1^m\cap Q_2^m \cap \cdots\cap Q_s^m.\] 
 \end{ex}
 
 In this paper we introduce a notion of irreducible powers for monomial ideals, which parallels the behavior in \Cref{ex:noemb}.
 
 \begin{defn}
\label{def:irredpower}
Let $I$ be a monomial ideal with monomial irreducible decomposition given by $I=J_1\cap J_2 \cap \cdots \cap J_s$.
For integers $m\geq 1$, the {\em $m$-th irreducible power} of $I$ is the ideal
$$I^{\{m\}} = J_1^m \cap J_2^m \cap \cdots \cap J_s^m.$$
\end{defn}

It is easy to see that the definition above does not depend on whether the decomposition is irredundant.

 \begin{rem}
 \label{ex:sqfree}
 If $I$ is a square-free monomial ideal then the irredundant irreducible decomposition of $I$ coincides with the combined primary decomposition thus the symbolic powers and irreducible powers of square-free monomial ideals coincide.
 
 More generally if the components in the irredundant irreducible decomposition of $I$ have distinct radicals, then the symbolic powers and irreducible powers  coincide.
 \end{rem}
 
One similarity between the symbolic and irreducible powers is that they both form graded families. A {\em graded family} of ideals $\{I_m\}_{m\in \N}$ is a collection of ideals that satisfies $I_a\cdot I_b\subseteq I_{a+b}$ for all pairs $a,b\in \N$.
 
 \begin{lem}
 The irreducible powers of a monomial ideal form a  graded family, i.e., any nonnegative integers $a,b$ give rise to a containment 
 \[
 I^{\{a\}}\cdot I^{\{b\}} \subseteq I^{\{a+b\}}.
 \]
 \end{lem}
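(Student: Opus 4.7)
The plan is to reduce this statement to the well-known fact that ordinary powers of any ideal form a graded family, i.e., $J^a\cdot J^b = J^{a+b}\subseteq J^{a+b}$, and then observe that this property survives under intersection of finitely many graded families.

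First I would fix an irreducible decomposition $I=J_1\cap \cdots \cap J_s$, so that by \Cref{def:irredpower} we have $I^{\{a\}}=J_1^a\cap \cdots \cap J_s^a$ and $I^{\{b\}}=J_1^b\cap \cdots \cap J_s^b$, with the analogous expression for $I^{\{a+b\}}$. For each index $i\in \{1,\ldots, s\}$, the intersection defining $I^{\{a\}}$ is contained in the single factor $J_i^a$, and similarly $I^{\{b\}}\subseteq J_i^b$. Multiplying these containments yields
\[
I^{\{a\}}\cdot I^{\{b\}} \subseteq J_i^a\cdot J_i^b \subseteq J_i^{a+b}.
\]

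Since this containment holds for each $i=1,\ldots,s$, the product is contained in every factor of the intersection defining the $(a+b)$-th irreducible power, hence
\[
I^{\{a\}}\cdot I^{\{b\}} \subseteq \bigcap_{i=1}^{s} J_i^{a+b} = I^{\{a+b\}},
\]
as required. There is no real obstacle here: the argument is a one-line consequence of distributing the product across the intersection, and the only fact being invoked is the trivially graded behavior of ordinary powers of a single ideal. The same reasoning applies to any graded family obtained by intersecting finitely many graded families, which is the conceptual content of the lemma.
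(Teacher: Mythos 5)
Your proof is correct and is exactly the argument the paper has in mind when it says the containment ``follows easily from Definition 2.5''; you have simply written out the details (distribute the product into each factor $J_i^a \cdot J_i^b = J_i^{a+b}$, then intersect over $i$). Nothing is missing or different.
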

 \begin{proof}
 The containment follows easily from \Cref{def:irredpower}.
 \end{proof}

In many ways, the irreducible powers of monomial ideals resemble closely the symbolic powers of square-free monomial ideals.  A similarity between irreducible powers of monomial ideals and symbolic powers of square-free monomial ideals is that their associated primes are among the associated primes of $I$. This is not the case for symbolic powers of arbitrary ideals; see \Cref{rem:noemb}.
 
 \begin{lem}
 \label{lem:asspowers}
 Let $I$ be a monomial ideal. Then for each integer $m\geq 1$ there are containments $I^m\subseteq I^{(m)}\subseteq I^{\{m\}}$ and $ \Ass(I^{\{m\}})=\Ass(I)$.
 \end{lem}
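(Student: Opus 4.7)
The plan is to verify the three assertions separately, relying on \Cref{lem:HHT} and the irredundant irreducible decomposition $I=J_1\cap \cdots \cap J_s$. The chain of containments naturally splits into the known inclusion $I^m\subseteq I^{(m)}$, the new inclusion $I^{(m)}\subseteq I^{\{m\}}$ for which I would combine \Cref{lem:HHT} with the elementary observation that taking $m$-th powers is compatible with intersections from one side, and the associated prime containment which should follow from the fact that each $J_j^m$ is primary.

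First I would dispose of $I^m\subseteq I^{(m)}$, which is immediate from \Cref{def:symbolicpower}: for every associated prime $P$ of $I$ one has $I^m\subseteq I^mR_P\cap R$, and intersecting over all $P$ preserves this containment. Next, for the containment $I^{(m)}\subseteq I^{\{m\}}$, I would invoke \Cref{lem:HHT} to write $I^{(m)}=\bigcap_{P\in \Max(I)} Q_{\subseteq P}^m$ and use the fact that each $Q_{\subseteq P}$ is itself an intersection of (irreducible) components of $I$, namely $Q_{\subseteq P}=\bigcap_{\sqrt{J_j}\subseteq P} J_j$. From the obvious inclusion $\bigl(\bigcap_{j\in S} J_j\bigr)^m \subseteq \bigcap_{j\in S} J_j^m$ I would conclude that $Q_{\subseteq P}^m\subseteq J_j^m$ for every $j$ with $\sqrt{J_j}\subseteq P$. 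Since every irreducible component $J_j$ of $I$ has radical $\sqrt{J_j}$ which is contained in some maximal associated prime, intersecting over $P\in \Max(I)$ captures every $j\in\{1,\dots,s\}$, yielding $I^{(m)}\subseteq \bigcap_{j=1}^s J_j^m = I^{\{m\}}$.

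For the assertion $\Ass(I^{\{m\}})\subseteq \Ass(I)$, I would first observe that if $J_j=(x_{i_1}^{a_1},\dots,x_{i_t}^{a_t})$, then $J_j^m$ is generated by monomials in $x_{i_1},\dots,x_{i_t}$ and contains a pure power of each of these variables; this forces $J_j^m$ to be $\sqrt{J_j}$-primary, so $\Ass(R/J_j^m)=\{\sqrt{J_j}\}$. Then, from the canonical injection $R/I^{\{m\}}\hookrightarrow \bigoplus_{j=1}^s R/J_j^m$ coming from $I^{\{m\}}=\bigcap_j J_j^m$, I would conclude $\Ass(I^{\{m\}})\subseteq \bigcup_j \Ass(R/J_j^m)=\{\sqrt{J_1},\dots,\sqrt{J_s}\}$, which is a subset of $\Ass(I)$ since the distinct radicals of the irreducible components of a monomial ideal are exactly the associated primes of $I$.

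The main obstacle I anticipate is the middle containment: one has to line up the indices correctly between the combined primary decomposition appearing in \Cref{lem:HHT} and the finer irreducible decomposition defining $I^{\{m\}}$, and it is essential that every irreducible component is accounted for by at least one maximal associated prime. The other two parts should be routine once this bookkeeping is in place.
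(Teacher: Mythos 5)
Your proof is correct and follows essentially the same route as the paper: use \Cref{lem:HHT} plus the fact that each irreducible component is refined by some $Q_{\subseteq P}$ to get $I^{(m)}\subseteq I^{\{m\}}$, then deduce the associated-primes containment from each $J_j^m$ being $\sqrt{J_j}$-primary. The one cosmetic difference is in how you justify the latter (via the monomial structure containing pure powers of a subset of variables), whereas the paper invokes that $J_j$ is generated by a regular sequence of pure powers, so $\Ass(J_j^m)=\Ass(J_j)$; both are valid.
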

 \begin{proof}
The containments $I^m\subseteq I^{(m)}\subseteq I^{\{m\}}$ follow from the definition of symbolic powers \Cref{def:symbolicpower} for the former and from \Cref{lem:HHT} for the latter.  In detail, if $I=J_1\cap \cdots\cap J_s$ is an irredundant irreducible decomposition, then for each $1\leq i \leq s$ there exists a prime $P_i\in \Max(I)$ such that $\sqrt{J_i}\subseteq P_i$. Then we see from \Cref{lem:HHT} that $Q_{\subseteq P_i}\subseteq J_i$ and so we deduce
\[
I^{(m)}=\bigcap_{P\in \Max(I)} Q_{\subseteq P}^m \subseteq \bigcap_{i=1}^s Q_{\subseteq P_i}^m  \subseteq \bigcap_{i=1}^s J_i^m =I^{\{m\}}.
\] 
 
Now  let $I=J_1\cap \cdots\cap J_s$ be an irredundant irreducible decomposition with $\mathfrak{p}_i=\sqrt{J_i}$. Since each irreducible ideal $J_i$ is generated by a regular sequence of pure powers of the variables, it follows that $\Ass(J_i^m)=\Ass(J_i)=\{\mathfrak{p}_i\}$ for each $i$ and thus we obtain
\[\Ass(I^{\{m\}})=\Ass(J_1^m\cap \cdots\cap J_s^m)= \{\mathfrak{p}_1,\ldots, \mathfrak{p}_s\}=\Ass(I).\]
 \end{proof}
 
 \begin{rem}
 \label{rem:noemb}
 \Cref{lem:asspowers} reveals that the irreducible powers of monomial ideals enjoy a property that the symbolic powers of monomial ideals which possess embedded primes do not enjoy. Specifically,  it is not in general true that the associated primes of the symbolic powers are restricted to a subset of $\Ass(I)$. For example, the ideal 
 \[ I=(x,y)\cap(x,z)\cap(x,w)\cap(y,z)\cap(y,w)\cap(z,w)\cap(x,y,z,w)^4\] has the property that $\Ass(I^{(2)})$ contains the primes $(x,y,z), (x,y,w),(x,z,w)$ and $(y,z,w)$ in addition to the associated primes of $I$.
 
 \end{rem}

\section{Convex bodies associated to powers of monomial ideals}

\subsection{The symbolic and irreducible polyhedra of a monomial ideal}

In this section we define new convex bodies associated to decompositions of monomial ideals. In our main cases of interest these convex bodies will be polyhedra.  A polyhedron can be defined in two different manners, either as convex hulls of a set of points in Euclidean space or as a finite intersection of half spaces. All polyhedra considered in this section will be unbounded.

 \begin{defn}
For a monomial ideal $I$, the {\em Newton polyhedron} of $I$, denoted $NP(I)$, is the convex hull of the exponent vectors for all the monomials in $I$
\[
NP(I)= \conv  \{\ba\in \N^n \mid \bx^\ba\in I\}.
\]
\end{defn}

One of the useful properties of Newton polyhedra is that they scale linearly upon taking ordinary powers of ideals, namely the following identity holds for all $m\in \N$:
\[NP(I^m)=mNP(I). \] 

The situation becomes more complicated upon considering  Newton polyhedra for the symbolic powers or for the irreducible powers, as taking Newton polyhedra does not commute with intersections of ideals. Specifically, there is always a containment \[NP(J_1\cap \cdots\cap J_s)\subseteq NP(J_1)\cap\cdots \cap NP(J_s),\] but this rarely becomes an equality. However, we shall see that there is an asymptotic sense in which Newton polyhedra can be taken to commute with intersections of ideals. To elaborate on this, we introduce two more convex bodies, one corresponding to each of the notions of symbolic and irreducible powers introduced in the previous section. 

Following \cite[Definition 5.3]{cooper2017symbolic}, which in turn takes inspiration from \Cref{lem:HHT}, we define a symbolic polyhedron associated to a monomial ideal.
\begin{defn}
\label{def:symbolicpolyhedron}
The {\em symbolic polyhedron} of a monomial ideal $I$ with primary decomposition $I=Q_1\cap\cdots\cap Q_s$ is
\[SP(I)=\bigcap_{P\in \Max(I)}NP(Q_{\subseteq P}) \qquad \text{ where }\qquad  Q_{\subseteq P}=\bigcap_{\sqrt{Q_i}\subseteq P} Q_i.\]
This polyhedron does not depend on the choice of primary decomposition by \Cref{rem:uniquecombined}.
\end{defn}

Similarly, with inspiration taken from \Cref{def:irredpower}, we introduce a new convex body termed the irreducible polyhedron.
\begin{defn}
\label{def:irredpolyhedron}
The {\em irreducible polyhedron} of a monomial ideal $I$ with monomial irreducible decomposition $I=J_1\cap\cdots\cap J_s$ is
\[IP(I)=NP(J_1)\cap \cdots \cap NP(J_s).\]
\end{defn}

\begin{ex} 
\label{ex:1}
The figure below shows the Newton, symbolic and irreducible polyhedra for the ideal $I=(x^2, xy, y^2)$ with  irreducible decomposition $I=(x,y^2)\cap(x^2,y)$. The dashed lines help visualize the boundaries of the convex bodies appearing in the definition of the irreducible polyhedron 
\[IP(I)=NP((x,y^2))\cap NP((x^2,y)).\] 
The equality $SP(I)=NP(I)$ follows in this case because $I$ is primary to the maximal ideal $\m=(x,y)$ of $k[x,y]$ and hence the combined primary decomposition consists of a single component, in other words $I=Q_{\subseteq \m}$ and $SP(I)=NP(Q_{\subseteq \m})=NP(I)$.

\begin{figure}[h!]
    \centering
    \pgfplotsset{posQuad/.append style={grid=both, xlabel={$x$}, ylabel={$y$}, line width=2pt, mark size=3pt,draw=NordWhite}}

    \begin{tikzpicture}[scale=0.65]
        \tikzstyle{every node}=[font=\small]
        \pgfplotsset{every axis legend/.append style={legend pos=outer north east,draw=NordWhite}}
        
        \begin{axis}
        [posQuad, xtick={0,...,3}, ytick={0,...,3}, 
        xmin=-0.05, xmax=3, ymin=-0.05, ymax=3]
        
        \addplot[area style, fill=NordBrightBlue, opacity=0.5, line width=0pt, draw = NordBrightBlue] coordinates{(0,3) (0,2) (2,0) (3,0) (3,3)};
        \addlegendentry{$NP(I)=SP(I)$}
        
        \addplot[only marks,mark=*, fill=NordYellow, draw=NordBlack] coordinates{(2,0) (0,2)};
               \end{axis}
        \end{tikzpicture}
   \begin{tikzpicture}[scale=0.65]
        \tikzstyle{every node}=[font=\small]
        \pgfplotsset{every axis legend/.append style={legend pos=outer north east,draw=NordWhite}}

                \begin{axis}
        [posQuad, xtick={0,...,3}, ytick={0,...,3}, 
        xmin=-0.05, xmax=3, ymin=-0.05, ymax=3]
        
      \addplot[area style, fill=NordRed, opacity=0.5, line width=0pt, draw = NordRed] coordinates{(0,3) (0,2) (2/3, 2/3) (2,0) (3,0) (3,3)};
        \addlegendentry{$IP(I)$}
        
\addplot[only marks,mark=*, fill=NordYellow, draw=NordBlack] coordinates{(2,0) (2/3, 2/3) (0,2)};
 \addplot[dashed, NordBlack] coordinates{(2,0) (0,1)};
  \addplot[dashed, NordBlack] coordinates{(1,0) (3,0)};
 \addplot[dashed, NordBlack] coordinates{(0,2) (1,0)};
  \addplot[dashed, NordBlack] coordinates{(0,1) (0,3)};
     \end{axis}
   \end{tikzpicture}
    \caption{The Newton, symbolic and irreducible polyhedra of $I=(x^2, xy, y^2)$}
    \label{fig:coloredboundaries}
\end{figure}
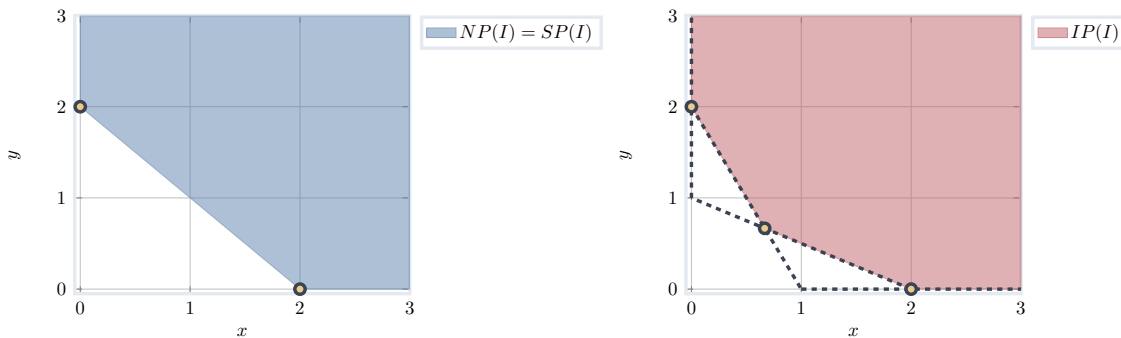
\end{ex}

\begin{rem}
\label{ex:sqfree2}
If $I$ is a square-free monomial ideal or more generally an ideal such that the radicals of the irredundant irreducible components are distinct, then $IP(I)=SP(I)$ by \Cref{ex:sqfree}.
\end{rem}

\begin{ex} 
\label{ex:2}
The figure below shows a partial view of the facets of the Newton, symbolic and irreducible polyhedra for the ideal $I=(xy,xz,yz)$ with irreducible decomposition $I=(x,y)\cap(x,z)\cap(y,z)$. The respective polyhedra are solid bodies located in the positive orthant and having the pictured facets as the outer boundary. In particular that $SP(I)$ has an additional vertex compared to $NP(I)$, which is  located at $(\frac{1}{2},\frac{1}{2},\frac{1}{2})$.
The equality $IP(I)=SP(I)$ follows in this case because $I$ is squarefree; see \Cref{ex:sqfree2}.

\begin{figure}[h!]
   \centering
    \begin{tikzpicture}[scale=0.65]
    \tikzstyle{every node}=[font=\small]
    \pgfplotsset{every axis legend/.append style={legend pos=outer north east,draw=NordWhite}}
        \pgfplotsset{every axis legend/.append style={legend pos=outer north east,draw=NordWhite}}
        
      \begin{axis}[view={-20}{-25}, xtick={0,...,2}, ytick={0,...,2}, ztick={0,...,2},    xmin=0, xmax=2, ymin=0, ymax=2, zmin=0, zmax=2]
        
 \addplot3[surf, faceted color = NordBrightBlue, color = NordBrightBlue, opacity = 0.5, line width = 1pt] coordinates {
     (1,1,0) (0,1,1) 
			 
	 (1,2,0) (0,2,1) 
	};
	 \addplot3[surf, faceted color = NordBrightBlue, color = NordBrightBlue, opacity = 0.5, line width = 1pt] coordinates {
     (1,1,0) (1,0,1) 
			 
	 (2,1,0) (2,0,1) 
	};
	 \addplot3[surf, faceted color = NordBrightBlue, color = NordBrightBlue, opacity = 0.5, line width = 1pt] coordinates {
     (1,0,1) (0,1,1) 
			 
	 (1,0,2) (0,1,2) 
	};
	 \addplot3[surf, faceted color = NordBrightBlue, color = NordBrightBlue, opacity = 0.6, line width = 1pt] coordinates {
     (1,0,1) (1,0,2) 
			 
	 (2,0,1) (2,0,2) 
	};
	 \addplot3[surf, faceted color = NordBrightBlue, color = NordBrightBlue, opacity = 0.6, line width = 1pt] coordinates {
     (1,1,0) (1,2,0) 
			 
	 (2,1,0) (2,2,0) 
	};
	 \addplot3[surf, faceted color = NordBrightBlue, color = NordBrightBlue, opacity = 0.6, line width = 1pt] coordinates {
     (0,1,1) (0,1,2) 
			 
	 (0,2,1) (0,2,2) 
	};
        \addplot3[surf, faceted color = NordBrightBlue, color = NordBrightBlue, opacity = 0.4, line width = 1pt] coordinates {
    (1,1,0) (1,0,1) 
    		 
(1,1,0)    (0,1,1) 	};
        \addlegendentry{$NP(I)$}
     
 \addplot3[only marks,mark=*, fill=NordYellow, draw=NordBlack] coordinates{(1,1,0) (1,0,1) (0,1,1)};
               \end{axis};
        \end{tikzpicture}
         \begin{tikzpicture}[scale=0.65]
    \tikzstyle{every node}=[font=\small]
    \pgfplotsset{every axis legend/.append style={legend pos=outer north east,draw=NordWhite}}
        \pgfplotsset{every axis legend/.append style={legend pos=outer north east,draw=NordWhite}}
        
      \begin{axis}[view={-20}{-25}, xtick={0,...,2}, ytick={0,...,2}, ztick={0,...,2},    xmin=0, xmax=2, ymin=0, ymax=2, zmin=0, zmax=2]
        
    \addplot3[surf, faceted color = NordBrightBlue, color = NordRed, opacity = 0.5, line width = 1pt] coordinates {
     (1,1,0) (0,1,1) 
			 
	 (1,2,0) (0,2,1) 
	};
	 \addplot3[surf, faceted color = NordBrightBlue, color = NordRed, opacity = 0.5, line width = 1pt] coordinates {
     (1,1,0) (1,0,1) 
			 
	 (2,1,0) (2,0,1) 
	};
	 \addplot3[surf, faceted color = NordBrightBlue, color = NordRed, opacity = 0.5, line width = 1pt] coordinates {
     (1,0,1) (0,1,1) 
			 
	 (1,0,2) (0,1,2) 
	};
	 \addplot3[surf, faceted color = NordBrightBlue, color = NordRed, opacity = 0.6, line width = 1pt] coordinates {
     (1,0,1) (1,0,2) 
			 
	 (2,0,1) (2,0,2) 
	};
	 \addplot3[surf, faceted color = NordBrightBlue, color = NordRed, opacity = 0.6, line width = 1pt] coordinates {
     (1,1,0) (1,2,0) 
			 
	 (2,1,0) (2,2,0) 
	};
	 \addplot3[surf, faceted color = NordBrightBlue, color = NordRed, opacity = 0.6, line width = 1pt] coordinates {
     (0,1,1) (0,1,2) 
			 
	 (0,2,1) (0,2,2) 
	};
        \addplot3[surf, faceted color = NordBrightBlue, color = NordRed, opacity = 0.4, line width = 1pt] coordinates {
    (1,1,0)  (0.5, 0.5, 0.5)	
    		 
 (1,1,0)   (1,0,1)
	};
	    \addplot3[surf, faceted color = NordBrightBlue, color = NordRed, opacity = 0.4, line width = 1pt] coordinates {
    (1,0,1)  (0.5, 0.5, 0.5)	
    		 
 (1,0,1)   (0,1,1)
	};
		    \addplot3[surf, faceted color = NordBrightBlue, color = NordRed, opacity = 0.4, line width = 1pt] coordinates {
    (1,1,0)  (0.5, 0.5, 0.5)	
    		 
 (1,1,0)   (0,1,1)
	};
        \addlegendentry{$SP(I)=IP(I)$}
     
 \addplot3[only marks,mark=*, fill=NordYellow, draw=NordBlack] coordinates{(1,1,0) (1,0,1) (0,1,1) (0.5, 0.5, 0.5)};
               \end{axis};
        \end{tikzpicture}

    \caption{The Newton, symbolic and irreducible polyhedra of $I=(xy, xz, yz)$}
    \label{fig:coloredboundaries}
\end{figure}
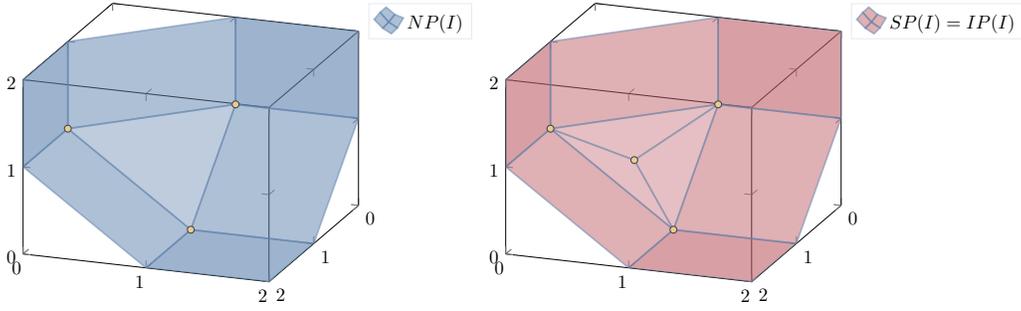
\end{ex}

We supplement the description of the irreducible polyhedron in \Cref{def:irredpolyhedron} by providing equations for hyperplanes supporting the facets of the polyhedron, which we term bounding hyperplanes. We term the linear inequalities describing a polyhedron as an intersection of half spaces its bounding inequalities.

Establishing the bounding inequalities for the symbolic polyhedron of an arbitrary  monomial ideal is generally an infeasible task. However, the analogous task is considerably easier for the irreducible polyhedron.

\begin{lem}
\label{rem:eqNP(Q)}
The bounding inequalities for the irreducible polyhedron of a monomial ideal $I$ are  read off a monomial
irreducible decomposition $I=J_1\cap \cdots J_s$ as follows: if  for each $1\leq i\leq s$ we have $J_i=(x_1^{a_{i1}},\ldots, x_{n}^{a_{in}})$, where $a_{ij}\in \N\cup\{-\infty\}$ with the convention that $x_i^{-\infty}=0$, then $IP(I)$ is the set of points $\by=(y_1,\ldots, y_n)\in \R^n$ which satisfy the system of inequalities \eqref{eq:IP} given below, where we set $\frac{1}{-\infty}=0$
\begin{equation}
\label{eq:IP}
 \begin{cases}
\frac{1}{a_{11}}y_{1}+\cdots +\frac{1}{a_{1n}} y_{n}\geq 1\\
\vdots\\
\frac{1}{a_{s1}}y_{1}+\cdots +\frac{1}{a_{sn}} y_{n}\geq 1\\
y_1,\ldots, y_n\geq 0
\end{cases}.
\end{equation}
\end{lem}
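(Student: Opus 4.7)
The plan is to reduce the lemma to a statement about a single irreducible component and then simply intersect. By \Cref{def:irredpolyhedron} we already have $IP(I)=NP(J_1)\cap\cdots\cap NP(J_s)$, so it suffices to prove that for each $1\le i\le s$ the Newton polyhedron of the irreducible monomial ideal $J_i=(x_{i1}^{a_{i1}},\ldots,x_{ih_i}^{a_{ih_i}})$ equals
\[
\Bigl\{\by\in\R^n_{\geq 0}\ \Big|\ \tfrac{1}{a_{i1}}y_{i1}+\cdots+\tfrac{1}{a_{ih_i}}y_{ih_i}\ge 1\Bigr\}.
\]
Intersecting these $s$ half-space descriptions with the common non-negativity constraints then yields exactly the system \eqref{eq:IP}.

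The first step is to recall the well-known fact that for any monomial ideal generated by monomials $\bx^{\ba_1},\ldots,\bx^{\ba_r}$ one has $NP=\conv\{\ba_1,\ldots,\ba_r\}+\R^n_{\geq 0}$. Indeed, the set $\{\ba\in\N^n : \bx^\ba\in I\}$ is closed under addition of elements of $\N^n$, so its convex hull is closed under addition of elements of $\R^n_{\geq 0}$; and every monomial in $I$ is a multiple of some generator, so its exponent vector is of the form $\ba_j+\bc$ with $\bc\in\N^n$. Specializing to $J_i$, whose generators are the pure powers $x_{ij}^{a_{ij}}$, I obtain
\[
NP(J_i)=\conv\bigl\{a_{ij}\,\be_{ij}\ :\ 1\le j\le h_i\bigr\}+\R^n_{\geq 0}.
\]

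The second step is to identify this set with the half-space in display above. For the inclusion $\subseteq$, each vertex $a_{ij}\be_{ij}$ satisfies the inequality with equality, so every convex combination does too; adding a non-negative vector only increases the left-hand side, which is a non-negative linear functional. For the reverse inclusion $\supseteq$, take $\by\ge 0$ with $\sum_{j=1}^{h_i} y_{ij}/a_{ij}\ge 1$ and choose $\lambda_j\in[0,\,y_{ij}/a_{ij}]$ with $\sum_j\lambda_j=1$, which is possible precisely because the sum is at least $1$. Then $\by=\sum_j \lambda_j\bigl(a_{ij}\be_{ij}\bigr)+\bw$, where $\bw$ has coordinate $y_{ij}-\lambda_j a_{ij}\ge 0$ in position $i_j$ and coordinate $y_k\ge 0$ in any other position; this exhibits $\by$ as a point of $\conv\{a_{ij}\be_{ij}\}+\R^n_{\geq 0}$.

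I do not expect any real obstacle here; the argument is essentially a direct computation together with the standard convex-geometric description of Newton polyhedra for monomial ideals. The only small care needed is the notational identification $y_{ij}=y_k$ whenever $x_{ij}=x_k$, but this is unambiguous because inside a single irreducible component $J_i$ the generators involve distinct variables, so the $h_i$ coefficients $1/a_{ij}$ in a given inequality are attached to $h_i$ distinct coordinates of $\by$.
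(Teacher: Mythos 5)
Your proposal is correct and follows essentially the same route as the paper: reduce to a single irreducible component via $IP(I)=\bigcap_i NP(J_i)$, and identify $NP(J_i)$ with the half-space cut out by $\sum_j y_{ij}/a_{ij}\geq 1$ in the positive orthant. The paper simply asserts this description of $NP(J_i)$ (describing it as the complement of a simplex in the orthant), whereas you supply the two-inclusion verification explicitly using $NP(J_i)=\conv\{a_{ij}\be_{ij}\}+\R^n_{\geq 0}$; this is the same argument with more detail filled in.
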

\begin{proof}
For each irreducible component $J_i$ we have that $NP(J_i)$ is the complement within the positive orthant of $\R^n$ of a simplex with vertices given by the origin and the exponent vectors of the minimal monomial generators $x_{1}^{a_{i1}},\ldots, x_{n}^{a_{in}}$ for $J_i$, that is,
\[
NP(J_i)=\begin{cases}
\frac{1}{a_{11}}y_{1}+\cdots +\frac{1}{a_{in}} y_{n}\geq 1\\
y_1,\ldots, y_d\geq 0.
\end{cases}
\]
Equation \eqref{eq:IP} collects together all the inequalities of each $NP(J_i)$ according to \Cref{def:irredpolyhedron}.
\end{proof}

We next give an account of the containments between the three polyhedra discussed above. This is based upon observing that more refined decompositions of an ideal will yield larger polyhedra. We make this precise in the following lemma.

\begin{lem}
\label{lem:refined}
Assume given two collections of monomial ideals $I_1, \ldots,  I_t$ and $J_1, \ldots, J_s$
 such that the latter  refines the former, that is, for each $1\leq j\leq s$ there exists $1\leq i_j \leq t$ such that $I_{i_j}\subseteq J_j$. Then there is a containment of polyhedra 
 \[
NP(I_1)\cap \cdots \cap NP(I_t)\subseteq NP(J_1)\cap \cdots \cap NP(J_s).
 \]
\end{lem}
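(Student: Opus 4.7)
The plan is to reduce the statement to the single-ideal monotonicity assertion that $I\subseteq J$ implies $NP(I)\subseteq NP(J)$, and then assemble the global containment index by index using the hypothesis that the $J_j$'s refine the $I_i$'s.

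First I would establish the monotonicity lemma: if $I\subseteq J$ are monomial ideals, then $NP(I)\subseteq NP(J)$. This is immediate from the definition of the Newton polyhedron as the convex hull of exponent vectors of monomials in the ideal: every monomial of $I$ is a monomial of $J$, so the generating set on the left is contained in the generating set on the right, and convex hull is monotone under inclusion of generating sets.

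Next I would fix a point $\by \in NP(I_1)\cap\cdots\cap NP(I_t)$ and argue coordinatewise in $j$. For each $1\le j\le s$, the refinement hypothesis supplies an index $i_j$ with $I_{i_j}\subseteq J_j$. By the previous step $NP(I_{i_j})\subseteq NP(J_j)$, and since $\by\in NP(I_{i_j})$ by assumption, we conclude $\by\in NP(J_j)$. Letting $j$ vary over $1,\ldots,s$ places $\by$ in the intersection on the right, which gives the desired containment.

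There is no serious obstacle here: the only substantive content is the monotonicity of $NP(\cdot)$ under inclusion of ideals, which is a direct consequence of the definition. If anything, a minor subtlety worth flagging in the write-up is that $NP(I)$ is defined as the convex hull of \emph{all} exponent vectors of monomials in $I$ (not just minimal generators), so that the inclusion $I\subseteq J$ translates transparently into an inclusion of the generating point sets for the convex hulls; no appeal to upper sets in $\N^n$ or to Dickson's lemma is necessary.
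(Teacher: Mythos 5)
Your proof is correct and takes essentially the same route as the paper: both rest on the monotonicity of $NP(\cdot)$ under inclusion of ideals and then use the refinement hypothesis to pass from the left intersection to the right one. The only cosmetic difference is that you argue pointwise (fixing $\by$ and checking membership in each $NP(J_j)$) while the paper chains set inclusions directly; the content is identical.
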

\begin{proof}
Employing the hypothesis that for each $1\leq j\leq s$ there exists $1\leq i_j \leq t$ such that $I_{i_j}\subseteq J_j$, we deduce that $NP(I_{j_i})\subseteq NP(J_i)$. Thus we obtain the desired containments
\[NP(I_1)\cap \cdots \cap NP(I_t)\subseteq NP(I_{j_i})\cap \cdots \cap NP(I_{j_s})\subseteq NP(J_1)\cap \cdots \cap NP(J_s).
\]
\end{proof}

With this key ingredient in hand we established the containments between the three types of polyhedra considered in this paper.

\begin{thm}
\label{thm:naivecontainment}
Every monomial ideal $I$ satisfies the following containments 
\[NP(I)\subseteq SP(I) \subseteq IP(I).\]
\end{thm}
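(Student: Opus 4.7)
The plan is to derive both containments as consequences of \Cref{lem:refined} by viewing the three polyhedra as arising from three successively finer decompositions of the ideal $I$: the trivial decomposition $I=I$, the combined primary decomposition $I=\bigcap_{P\in\Max(I)}Q_{\subseteq P}$, and the irredundant irreducible decomposition $I=J_1\cap\cdots\cap J_s$. The Newton polyhedron $NP(I)$ corresponds to the trivial decomposition, the symbolic polyhedron $SP(I)$ to the combined primary decomposition (by \Cref{def:symbolicpolyhedron}), and the irreducible polyhedron $IP(I)$ to the irreducible decomposition (by \Cref{def:irredpolyhedron}). So the task reduces to verifying the two refinement hypotheses needed to invoke \Cref{lem:refined}.

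For the first containment $NP(I)\subseteq SP(I)$, I would note that since $I=\bigcap_{P\in\Max(I)}Q_{\subseteq P}$, each $Q_{\subseteq P}$ satisfies $I\subseteq Q_{\subseteq P}$, so the collection $\{Q_{\subseteq P}\}_{P\in\Max(I)}$ refines the single-element collection $\{I\}$ in the sense of \Cref{lem:refined}. Applying the lemma yields $NP(I)\subseteq \bigcap_{P}NP(Q_{\subseteq P})=SP(I)$.

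For the second containment $SP(I)\subseteq IP(I)$, I would argue that the irredundant irreducible decomposition refines the combined primary decomposition. Concretely, each irreducible component $J_j$ has radical $\sqrt{J_j}=\mathfrak{p}_j\in\Ass(I)$, and choosing $P_j\in\Max(I)$ with $\mathfrak{p}_j\subseteq P_j$ ensures that $J_j$ appears among the primary components whose radical is contained in $P_j$ (once the monomial primary components $Q_i$ with $\sqrt{Q_i}\subseteq P_j$ are themselves broken into irreducible ideals). Hence $Q_{\subseteq P_j}\subseteq J_j$, which is exactly the refinement hypothesis of \Cref{lem:refined}. That lemma then delivers $SP(I)=\bigcap_{P}NP(Q_{\subseteq P})\subseteq \bigcap_{j}NP(J_j)=IP(I)$.

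The main obstacle, and the only point requiring care, is the refinement claim in the second step: one must be precise about the fact that the irreducible decomposition of $I$ is obtained by further decomposing each monomial primary component into irreducible monomial ideals with the same radical, so that the $J_j$'s indeed group by the associated primes $P\in\Max(I)$ in a way compatible with the definition of $Q_{\subseteq P}$. Everything else is a direct book-keeping application of \Cref{lem:refined}.
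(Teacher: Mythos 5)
Your proof is correct and mirrors the paper's argument almost exactly: both containments are obtained from \Cref{lem:refined}, using the refinement of the trivial decomposition by the combined primary decomposition $I=\bigcap_{P\in\Max(I)}Q_{\subseteq P}$ for the first, and the refinement of the combined primary decomposition by the irredundant irreducible decomposition for the second. The only minor difference is stylistic: the paper makes the refinement $Q_{\subseteq P_j}\subseteq J_j$ transparent by directly computing $Q_{\subseteq P}=\bigcap_{\sqrt{J_i}\subseteq P}J_i$ from the irreducible decomposition (invoking \Cref{rem:uniquecombined} to justify independence of the choice of primary decomposition), whereas you phrase it in terms of splitting arbitrary primary components into irreducibles, which is equivalent.
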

\begin{proof}
Let \(I=J_1\cap  \cdots\cap J_s\) be a monomial irreducible decomposition, then it is also a primary decomposition. Hence the combined primary decomposition \(I=\bigcap_{P\in\Max(I)} Q_{\subseteq P}\) can be computed using $Q_{\subseteq P}=\bigcap_{\sqrt{J_i}\subseteq P} J_i$ according to \Cref{rem:uniquecombined}. This shows that the irreducible decomposition refines the combined decomposition in the sense that for each $1\leq j \leq s$ there exists $P_j\in \Max(I)$ such that $Q_{\subseteq P_j}\subseteq J_j$. Indeed, this is the case for each $P\in \Max(I)$ such that $\sqrt{J_j}\subseteq P$ and such a prime exists by finiteness of the poset $\Ass(I)$. 

Now we apply \Cref{lem:refined} to obtain the second desired containment
\[
SP(I)=\bigcap_{P\in\Max(I)} NP(Q_{\subseteq P})\subseteq \bigcap_{j=1}^s NP(J_j)=IP(I).
\]
The remaining containment, $NP(I)\subseteq SP(I)$ can be deduced by applying \Cref{lem:refined} to the trivial decomposition $I=I$ and its refinement $I=\bigcap_{P\in \Max(I)} Q_{\subseteq P}$.
\end{proof}

\begin{rem}
The containments $NP(I)\subseteq SP(I) \subseteq IP(I)$ can be strict. See \Cref{ex:1} for an ideal with $NP(I)\subsetneq SP(I)$ and \Cref{ex:2} for an ideal with $SP(I)\subsetneq IP(I)$.
\end{rem}


\subsection{Asymptotic Newton polyhedra for graded families of monomial ideals}

Let $\{I_m\}_{m\geq 1}$ denote a graded family of monomial ideals. By definition, such a family satisfies containments $I_a\cdot I_b\subseteq I_{a+b}$ for each pair $a,b\in \N$. We define a convex body capturing the asymptotics of each such family. This construction bears some resemblance to the Newton-Okounkov bodies of \cite{LM, KK}. A similar construction appears in \cite{Mayes} but for a different family of monomial ideals. 

\begin{defn}
\label{def:limitingshape}
Given a graded family of monomial ideals $\mathcal{I}:=\{I_m\}_{m\geq 1}$, the {\em limiting body} associated to this family is
\[
\mathcal{C}(\mathcal{I})=\bigcup _{m\to \infty} \frac{1}{m} NP(I_m).
\]
If the limiting body is a polyhedron, we call it the {\em asymptotic Newton polyhedron} associated to the family $\mathcal{I}$. For an example of non polyhedral limiting body see \Cref{limnotpolyhedral}.
\end{defn}

\begin{ex}
For the family of ordinary powers $\{I^m\}_{m\in \N}$ of a monomial ideal, the sequence $ \frac{1}{m} NP(I^m)$ is constant, each term being equal to $NP(I)$. Thus the asymptotic Newton polyhedron associated to the family of ordinary powers of $I$ is none other than the Newton polyhedron of $I$ itself.
\end{ex}

\begin{lem}
The limiting body for a graded family of monomial ideals is a convex body. 
\end{lem}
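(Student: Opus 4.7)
The plan is to prove convexity directly by showing that any two points of the limiting body lie in a single convex subset contained in $\mathcal{C}(\mathcal{I})$. Since each piece $\tfrac{1}{m}NP(I_m)$ of the union is a rescaling of a polyhedron and therefore convex, it suffices to prove that any two pieces are jointly contained in a third piece of the union.

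To this end, the first step is to establish a monotonicity lemma: if $a\mid m$, say $m=ka$, then $\tfrac{1}{a}NP(I_a)\subseteq \tfrac{1}{m}NP(I_m)$. This rests on two ingredients. First, for any monomial ideal $I$ and any positive integer $k$ there is an identity $NP(I^k)=k\cdot NP(I)$, which follows upon writing $NP(I)=\conv\{\ba_1,\ldots,\ba_r\}+\R_{\geq 0}^n$ in terms of the exponent vectors $\ba_i$ of the minimal monomial generators of $I$ and observing that the generators of $I^k$ have exponents of the form $\ba_{i_1}+\cdots+\ba_{i_k}$. Second, iterating the graded family axiom $I_a\cdot I_a\subseteq I_{2a}$ gives $I_a^k\subseteq I_{ak}$ and hence $NP(I_a^k)\subseteq NP(I_{ak})$. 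Combining these yields $k\cdot NP(I_a)\subseteq NP(I_{ka})$, and rescaling by $1/(ka)$ gives the desired containment.

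With the monotonicity lemma in hand, convexity follows easily. Given any two points $p,q\in \mathcal{C}(\mathcal{I})$, pick positive integers $a,b$ with $p\in \tfrac{1}{a}NP(I_a)$ and $q\in \tfrac{1}{b}NP(I_b)$, and apply the lemma with $m=ab$: both $p$ and $q$ then lie in the convex set $\tfrac{1}{ab}NP(I_{ab})$, so every convex combination of them does as well, and thus belongs to $\mathcal{C}(\mathcal{I})$.

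The main obstacle is the monotonicity lemma, as it is the only place where the graded family hypothesis enters the argument substantively; once it is in hand the convexity conclusion is essentially a formality, reflecting the fact that the collection $\{\tfrac{1}{m}NP(I_m)\}_{m\geq 1}$ is directed under inclusion along common multiples of the indices.
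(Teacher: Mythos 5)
Your proposal is correct and follows essentially the same approach as the paper: both establish the monotonicity $\tfrac{1}{m}NP(I_m)\subseteq\tfrac{1}{mk}NP(I_{mk})$ via $(I_m)^k\subseteq I_{mk}$ and the scaling identity $NP(I^k)=k\cdot NP(I)$, then place any two points $p,q$ into the single convex piece $\tfrac{1}{ab}NP(I_{ab})$. You simply spell out the proof of the scaling identity in a bit more detail than the paper does.
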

\begin{proof}
Let $\mathcal{I}=\{I_m\}_{m\geq 1}$ be a graded family of monomial ideals. This implies that $(I_m)^k\subseteq I_{mk}$ for all $k\geq 1$ and hence $\frac{1}{m}NP(I_m)\subseteq \frac{1}{mk} NP(I_{mk})$. Now let $\ba, \bb\in \mathcal{C}( \mathcal{I})$ and suppose $\ba\in \frac{1}{a}NP(I_a)$ and  $\bb\in \frac{1}{b}NP(I_b)$. Then by the preceding argument $\ba, \bb$ are points of the same convex body $\frac{1}{ab}NP(I_{ab})$, which is a subset of $\mathcal{C}(\mathcal{I})$. Thus any convex combination of $\ba, \bb$ is also in $\frac{1}{ab}NP(I_{ab})$ and hence in $\mathcal{C}(\mathcal{I})$.
\end{proof}

\begin{rem}
\label{limnotpolyhedral}
Limiting bodies for arbitrary graded families of monomial ideals can fail to be polyhedral. They can also fail to be closed in the Euclidean topology. Consider for example, the family $\mathcal{I}$ of monomial ideals $I_m\subseteq k[x,y]$ such that $x^ay^b\in I_m$ if and only if $ab>m$. Then $\mathcal{C}(\mathcal{I})=\{(a,b) \mid ab> 1, a\geq 0, b\geq 0\}$ is a non-polyhedral convex region in $\R^2$ and is not closed in the Euclidean topology.
\end{rem}

We now consider graded families arising from decompositions into monomial ideals. In this scenario the limiting body is a polyhedron that can be described explicitly.

\begin{thm}
\label{thm:limitshapeintcldecomp}
Let $I$ be a monomial ideal equipped with a decomposition into monomial ideals $I=J_1\cap \cdots \cap J_s$. Consider the graded family $\mathcal{I}=\{I_m\}_{m\geq 1}$ where 
\[ I_m=J_1^m\cap \cdots \cap J_s^m. \]
Then the limiting body associated to this family is a polyhedron termed the asymptotic Newton polyhedron of this family. It can be described equivalently as 
\[
\mathcal{C}(\mathcal{I})=NP(J_1)\cap \cdots \cap NP(J_s).
\]
\end{thm}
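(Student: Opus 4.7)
The plan is to prove the two opposite containments. The inclusion $\bigcup_m \frac{1}{m}NP(I_m) \subseteq \bigcap_i NP(J_i)$ is immediate: since $I_m\subseteq J_i^m$ for every $i$, taking Newton polyhedra and using $NP(J_i^m)=m\,NP(J_i)$ gives $\frac{1}{m}NP(I_m)\subseteq NP(J_i)$ for each $i$, so the scaled polyhedron lies in the intersection, and taking the union over $m$ preserves this.

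For the reverse inclusion, set $P:=\bigcap_i NP(J_i)$. I plan to find a single integer $M\geq 1$ such that $MP\subseteq NP(I_M)$, which yields $P\subseteq \frac{1}{M}NP(I_M)\subseteq \bigcup_m\frac{1}{m}NP(I_m)$. Observe that $P$ is a rational polyhedron whose recession cone equals $\R_{\geq 0}^n$ (each $NP(J_i)$ has this recession cone, and recession cones commute with intersections), so $P=\operatorname{conv}\{\bv_1,\dots,\bv_r\}+\R_{\geq 0}^n$ for finitely many rational vertices. Since the Newton polyhedron of any monomial ideal is convex and invariant under translation by $\R_{\geq 0}^n$, it suffices to exhibit a single $M$ such that $M\bv_k\in NP(I_M)$ for every $k=1,\ldots,r$; convexity and the upper-set property then propagate this from the vertices to all of $MP$.

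To produce such $M$ I will rely on the standard characterization of integral closure of a monomial ideal $J$: the monomial $\bx^{\ba}$ lies in $\overline{J}$ exactly when $\ba\in NP(J)\cap\N^n$, equivalently when $\bx^{t\ba}\in J^t$ for some positive integer $t$. Fix a vertex $\bv_k$ of $P$ and clear its denominators to obtain $N_k$ with $N_k\bv_k\in \N^n$; since $\bv_k\in NP(J_i)$ for every $i$, one gets $N_k\bv_k\in NP(J_i^{N_k})\cap \N^n$, so $\bx^{N_k\bv_k}\in \overline{J_i^{N_k}}$. The characterization yields integers $L_{i,k}$ with $\bx^{L_{i,k}N_k\bv_k}\in J_i^{L_{i,k}N_k}$. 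Setting $m_k := N_k\cdot \lcm_i L_{i,k}$ and raising to the appropriate power produces $\bx^{m_k\bv_k}\in J_i^{m_k}$ for every $i$, hence $\bx^{m_k\bv_k}\in I_{m_k}$. Finally let $M:=\lcm(m_1,\ldots,m_r)$: the graded family property $I_a^t\subseteq I_{at}$ gives $\bx^{M\bv_k}=(\bx^{m_k\bv_k})^{M/m_k}\in I_M$ for every $k$ simultaneously, completing the proof. The main obstacle is the vertex-by-vertex integral-closure step; the passage from vertex membership to the full polyhedron is then a direct consequence of the convex-geometric structure of Newton polyhedra, and the initial inclusion is essentially formal.
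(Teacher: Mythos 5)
Your proof is correct and follows the same overall structure as the paper's: the easy inclusion $\bigcup_m \frac{1}{m}NP(I_m) \subseteq \bigcap_i NP(J_i)$ from $I_m \subseteq J_i^m$ and $NP(J_i^m)=mNP(J_i)$, and then the reverse inclusion by showing that every rational vertex of $P=\bigcap_i NP(J_i)$ lands in $\frac{1}{m}NP(I_m)$ for a suitable denominator-clearing $m$, with convexity and upper-closedness of $NP(I_m)$ under $\R_{\geq 0}^n$ finishing the job. The one genuine difference is the tool used in the key step: to show that a rational point $\bv\in NP(J_i)$ yields $\bx^{m\bv}\in J_i^m$, you invoke the integral-closure characterization of monomial ideals ($\bx^\ba\in\overline{J}$ iff $\ba\in NP(J)\cap\N^n$ iff $\bx^{t\ba}\in J^t$ for some $t\geq 1$), whereas the paper derives the same conclusion directly from Carath\'eodory's theorem for unbounded polyhedra, writing $\bv$ as a rational convex combination of vertices of $NP(J_i)$ plus a nonnegative vector and clearing denominators. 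The two are equivalent — the cited integral-closure fact for monomial ideals is essentially proved by the paper's Carath\'eodory argument — so your route trades a self-contained convex-geometry step for a black-box algebraic lemma. A secondary, purely cosmetic, difference is that you are more careful to produce a single uniform $M$ (via an LCM over vertices and components) so that $MP\subseteq NP(I_M)$, while the paper handles one rational point and one component at a time, which suffices since the limiting body is a union over all $m$.
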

\begin{proof}
We start by proving the last assertion.
Let $\mathcal{Q}=\bigcap_{i=1}^s NP(J_i)$.
First we see that for each $m\geq 1$ one has the containment $\frac{1}{m}NP(I_m)\subseteq \mathcal{Q}$. Indeed, since $ I_m=J_1^m\cap \cdots \cap J_s^m$, we have $NP(I_m)\subseteq  \bigcap_{i=1}^s NP(J_i^m)=m \cdot \mathcal{Q}$. This yields the inclusion $\mathcal{C}(\mathcal{I})\subseteq \mathcal{Q}$.

Next, for the opposite containment, we will show that for each $1\leq i\leq s$ every point of $\mathcal{Q} \cap \Q^n$ is in  $\mathcal{C}(\mathcal{I})$. This is enough to guarantee the containment $\mathcal{Q}\subseteq \mathcal{C}(\mathcal{I})$, since all the vertices of the former polyhedron have rational coordinates. Thus assume $\ba\in \mathcal{Q} \cap \Q^n$ and hence  $\ba\in NP(J_i) \cap \Q^n$ for all $1\leq i\leq s$. Fix $i$ and let $\bv_1,\ldots, \bv_t$ be the vertices of the polyhedron $NP(J_i)$. Since these correspond to a subset of the monomial generators of $J_i$ we notice that $\bv_j\in \Z^n$ for all $1\leq j\leq t$ and
\[
NP(J_i) = \conv\{\bv_1,\ldots, \bv_t\}+\R_{\geq 0}^n.
\]
By a version of Carath\'eodory's theorem for unbounded polyhedra \cite[Theorem 5.1]{cooper2017symbolic} we can write
\[
\ba=\sum_{j=1}^n \lambda_j\bv_{i_j} +\sum_{j=1}^n c_j \be_j,
\]
where $\lambda_j, c_j\geq 0$ are rational numbers satisfying $\sum_{j=1}^n \lambda_j=1$. Let $m$ be the least common multiple of the denominators of the rational numbers $ \lambda_j, c_j$ for all $1\leq j\leq n$. Multiplying the equation displayed above by $m$ we deduce the identity
\[
m\ba=\sum_{j=1}^n m\lambda_j\bv_{i_j} +\sum_{j=1}^n mc_j \be_j,
\]
where $\sum_{j=1}^n m \lambda_j=m$ and $m\lambda_j\in\N$ for all $1\leq j\leq n$. This yields that $\bx^{m\ba}\in J_i^m$ and since the argument holds for each $i$, we deduce that $\bx^{m\ba} \in \bigcap_{i=1}^s J_i^m=I_m$. Based on this we see that $\ba\in \frac{1}{m}NP(I_m)\subseteq \mathcal{C}(\mathcal{I})$, as desired.

The fact that $\mathcal{C}(\mathcal{I})$ is a polyhedron follows from the identity $\mathcal{C}(\mathcal{I})=\mathcal{Q}$ shown above because the latter is a finite intersection of polyhedra.
\end{proof}

As we show in the following two corollaries, the previous theorem allows us to identify the symbolic and irreducible polyhedra as asymptotic Newton polyhedra for the graded families of symbolic powers and irreducible powers of a monomial ideals respectively.

\begin{cor}
\label{cor:SPlimiting}
Let $I$ be a monomial ideal. Then the asymptotic Newton polyhedron of the family of symbolic powers $\{I^{(m)}\}_{m\geq 1}$ is the symbolic polyhedron $SP(I)$. 
\end{cor}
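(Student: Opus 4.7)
The plan is to recognize the family of symbolic powers as an instance of the intersection-of-powers family to which \Cref{thm:limitshapeintcldecomp} directly applies, and then match the resulting asymptotic Newton polyhedron to $SP(I)$ by invoking the definition.

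Concretely, I would start from \Cref{lem:HHT}, which expresses the symbolic powers via the combined primary decomposition as
\[
I^{(m)} = \bigcap_{P \in \Max(I)} (Q_{\subseteq P})^m.
\]
Indexing the maximal elements of $\Ass(I)$ as $P_1, \ldots, P_s$ and setting $J_i := Q_{\subseteq P_i}$, this has precisely the shape $I_m = J_1^m \cap \cdots \cap J_s^m$ appearing in the hypothesis of \Cref{thm:limitshapeintcldecomp}. Since the $Q_{\subseteq P}$ are themselves monomial ideals (being intersections of monomial primary components, per \Cref{rem:uniquecombined}), the theorem applies.

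Applying \Cref{thm:limitshapeintcldecomp} to this family yields
\[
\mathcal{C}\bigl(\{I^{(m)}\}_{m\geq 1}\bigr) = \bigcap_{i=1}^{s} NP(J_i) = \bigcap_{P \in \Max(I)} NP(Q_{\subseteq P}),
\]
and the right-hand side is exactly $SP(I)$ by \Cref{def:symbolicpolyhedron}. In particular, the limiting body is a polyhedron, so it qualifies as an asymptotic Newton polyhedron in the sense of \Cref{def:limitingshape}.

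There is no real obstacle here beyond bookkeeping: the substantive content (that scaled Newton polyhedra of intersected powers converge to the intersection of Newton polyhedra) is already packaged in \Cref{thm:limitshapeintcldecomp}, and \Cref{lem:HHT} is what allows us to view symbolic powers in the required form. The only point worth emphasizing in the write-up is the choice of the combined primary decomposition over an arbitrary primary decomposition, since it is the former for which \Cref{lem:HHT} guarantees the clean identity $I^{(m)} = \bigcap (Q_{\subseteq P})^m$, and for which the resulting asymptotic polyhedron matches \Cref{def:symbolicpolyhedron} on the nose.
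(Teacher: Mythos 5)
Your proof is correct and follows essentially the same route as the paper: apply \Cref{thm:limitshapeintcldecomp} to the combined primary decomposition $I=\bigcap_{P\in\Max(I)}Q_{\subseteq P}$, using \Cref{lem:HHT} to recognize the symbolic powers as $\bigcap(Q_{\subseteq P})^m$, and then match the result to \Cref{def:symbolicpolyhedron}. The only difference is presentational; the paper leaves the appeal to \Cref{lem:HHT} implicit while you spell it out.
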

\begin{proof}
This follows by applying  \Cref{thm:limitshapeintcldecomp} to the family of symbolic powers, which is defined in terms of the decomposition $I=\bigcap_{P\in \Max(I)} Q_{\subseteq P}$ with $Q_{\subseteq P}=IR_P\cap R$. Together with \Cref{def:symbolicpolyhedron}, this result yields the claim. 
\end{proof}

\begin{cor}
\label{cor:IPlimiting}
Let $I$ be a monomial ideal. Then the asymptotic Newton polyhedron of the family $\{I^{\{m\}}\}_{m\geq 1}$ of irreducible powers is the irreducible polyhedron $IP(I)$. 
\end{cor}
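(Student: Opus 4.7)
The plan is to derive this corollary as a direct instance of \Cref{thm:limitshapeintcldecomp} by taking the decomposition in that theorem to be an irreducible decomposition of $I$.

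First, I would fix a monomial irreducible decomposition $I = J_1 \cap \cdots \cap J_s$, which exists by Noether's result mentioned in \S 2. By \Cref{def:irredpower}, the $m$-th irreducible power is exactly $I^{\{m\}} = J_1^m \cap \cdots \cap J_s^m$, so the graded family $\{I^{\{m\}}\}_{m\geq 1}$ coincides with the family $\{I_m\}_{m\geq 1}$ constructed from this decomposition in the statement of \Cref{thm:limitshapeintcldecomp}. Applying that theorem yields
\[
\mathcal{C}(\{I^{\{m\}}\}) = NP(J_1) \cap \cdots \cap NP(J_s).
\]
By \Cref{def:irredpolyhedron}, the right-hand side is precisely $IP(I)$, which establishes the claim.

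There is essentially no obstacle here beyond checking that the hypotheses of \Cref{thm:limitshapeintcldecomp} match. One minor subtlety worth flagging is that \Cref{def:irredpower} observes independence of $I^{\{m\}}$ from the particular irreducible decomposition chosen, and similarly the irreducible polyhedron in \Cref{def:irredpolyhedron} is independent of this choice; so the identification $\mathcal{C}(\{I^{\{m\}}\}) = IP(I)$ is well-defined regardless of which irreducible decomposition is used to invoke \Cref{thm:limitshapeintcldecomp}. This mirrors exactly the structure of the proof of \Cref{cor:SPlimiting}, with the irreducible decomposition playing the role that the combined primary decomposition plays there.
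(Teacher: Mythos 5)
Your proof is correct and follows the same route as the paper's: apply \Cref{thm:limitshapeintcldecomp} to a monomial irreducible decomposition, then identify the resulting intersection of Newton polyhedra with $IP(I)$ via \Cref{def:irredpolyhedron}. The remark about well-definedness is a sensible addition but does not change the argument.
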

\begin{proof}
By \Cref{def:irredpower}, we are in the setting of \Cref{thm:limitshapeintcldecomp} where the family of irreducible powers is defined in terms of a monomial irreducible decomposition $I=\bigcap_{i=1}^s J_i$. Thus \Cref{thm:limitshapeintcldecomp} and \Cref{def:irredpolyhedron} yield the desired conclusion.
\end{proof}

\section{Asymptotic invariants for families of monomial ideals}

\subsection{Asymptotic initial degrees and linear optimization}

In this section we define asymptotic invariants for graded families of monomial ideals which are derived from their initial degree. For a homogeneous ideal $I$ the initial degree, denoted $\alpha(I)$, is the least degree of a non zero element of $I$. Also termed the order of the ideal, this invariant references the position of $I$ in the topology given by the powers of the homogeneous maximal ideal $\m$, specifically $\alpha(I)$ is the largest integer such that $ I\subseteq \m^{\alpha(I)}$.
This interpretation enters into the picture in \Cref{homomaxhasleastalpha}.

\begin{defn}
\label{def:asymptoticinitialdegree}
For a graded family of ideals $\mathcal{I}=\{I_m\}_{m\geq 1}$ define the {\em asymptotic initial degree} of the family to be 
$\alpha(\mathcal{I})=\lim_{m\to \infty} \frac{\alpha(I_m)}{m}.$
\end{defn}

\begin{lem}
\label{rem:infimum}
For a graded family of ideals $\mathcal{I}=\{I_m\}_{m\geq 1}$ the limit $\lim_{m\to \infty} \frac{\alpha(I_m)}{m}$ exists and is equal to $\inf_{m\geq 1} \left\{\frac{\alpha(I_m)}{m}\right\}$.
\end{lem}
\begin{proof}
The existence of the limit is ensured by Fekete's lemma \cite{Fekete} by means of the subadditivity of the sequence of initial degrees $\{\alpha(I_m)\}_{m\geq 1}$. In turn, the subadditivity arises from the graded family property, as the containments $I_aI_b\subseteq I_{a+b}$ give rise to inequalities $\alpha(I_{a+b})\leq \alpha(I_a)+\alpha(I_b)$ for all integers $a,b\geq 1$. Fekete's lemma also gives that the limit in \Cref{def:asymptoticinitialdegree} is equal to the infimum of the respective sequence.
\end{proof}

Applying the definition for asymptotic initial degree of the family of symbolic powers recovers the notion of Waldschmidt constant introduced in \cite{Waldschmidt} and studied widely in the literature starting with the inspiring paper \cite{BoH}.

\begin{defn}
\label{def:Waldschmidt}
Let $I$ be a homogeneous ideal. The asymptotic initial degree of the family of symbolic powers $\{I^{(m)}\}_{m\geq 1}$ is termed the {\em Waldschmidt constant} of $I$ and defined as follows
\[
\widehat{\alpha}(I)=\lim_{m\to \infty} \frac{\alpha(I^{(m)})}{m}.
\]
\end{defn}

Applying the definition for asymptotic initial degree of the family of irreducible powers yields a novel invariant.
\begin{defn}
\label{def:naiveWaldschmidt}
Let $I$ be a monomial ideal. The asymptotic initial degree of the family of irreducible powers $\{I^{\{m\}}\}_{m\geq 1}$ is termed the {\em naive Waldschmidt constant} of $I$ and defined as follows
\[
\widetilde {\alpha}(I)=\lim_{m\to \infty} \frac{\alpha(I^{\{m\}})}{m}.
\]
\end{defn}

We now show that asymptotic initial degrees for families of monomial ideals are solutions to an optimization problem. The initial degree of a monomial ideal $I$ can be expressed as the solution of a linear programming problem in the following manner: 
\begin{equation}
\label{eq:alphaLP}
\alpha(I)=\min\{y_1+\cdots+y_n \mid (y_1,\cdots, y_n)\in NP(I)\}.
\end{equation}
This is because the optimal solution is attained at a vertex of $NP(I)$ and the vertices of $NP(I)$ correspond to a subset of the minimal generators of $I$.
We see below that the asymptotic initial degree for a graded family of monomial ideals can also be expressed as an optimization problem. Moreover, the feasible set is the limiting body of the family as defined in \Cref{def:limitingshape}. 

\begin{thm}
Let $\mathcal{I}=\{I_m\}_{m\geq 1}$ be a graded family of monomial ideals. Then $\alpha(\mathcal{I})$ is the solution of the following optimization problem
\begin{center}
\begin{tabular}{rl}
\label{LP}
\emph{minimize} & $y_1+\cdots+y_n $\\
\emph{subject to} & $(y_1,\cdots, y_n)\in \overline{\mathcal{C}(\mathcal{I})}$,
\end{tabular}
\end{center}
where $\overline{\mathcal{C}(\mathcal{I})}$ denotes the closure of $\mathcal{C}(\mathcal{I})$ in the Euclidean topology of $\R^n$.
\end{thm}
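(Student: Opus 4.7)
The plan is to connect the asymptotic initial degree to the LP value by passing through the finite-stage identities $\alpha(I_m)/m = \min\{y_1+\cdots+y_n \mid \by \in \tfrac{1}{m}NP(I_m)\}$ (which follows immediately from \eqref{eq:alphaLP} by homogeneity of the objective), and then exploiting the description $\mathcal{C}(\mathcal{I}) = \bigcup_{m\geq 1} \tfrac{1}{m}NP(I_m)$ from \Cref{def:limitingshape}. Let $L$ denote the infimum of $y_1+\cdots+y_n$ over $\overline{\mathcal{C}(\mathcal{I})}$; I will prove $L = \alpha(\mathcal{I})$ by establishing each inequality separately, and verify attainment of the minimum at the end.

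First I will show $L \leq \alpha(\mathcal{I})$. For each $m \geq 1$, the set $\tfrac{1}{m}NP(I_m)$ is contained in $\mathcal{C}(\mathcal{I})$ and hence in $\overline{\mathcal{C}(\mathcal{I})}$. A minimizing vertex of $\tfrac{1}{m}NP(I_m)$ therefore gives a feasible point of the LP with objective value $\alpha(I_m)/m$. Taking the infimum over $m$ and invoking \Cref{rem:infimum} to identify this infimum with the limit $\alpha(\mathcal{I})$ yields $L \leq \alpha(\mathcal{I})$.

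For the reverse inequality, first handle points $\by \in \mathcal{C}(\mathcal{I})$: by definition $\by \in \tfrac{1}{m}NP(I_m)$ for some $m$, so the objective value at $\by$ satisfies $y_1+\cdots+y_n \geq \alpha(I_m)/m \geq \alpha(\mathcal{I})$ (again using that the limit equals the infimum). To extend this to $\overline{\mathcal{C}(\mathcal{I})}$, note that the objective functional $\by \mapsto y_1+\cdots+y_n$ is continuous, so any limit point of $\mathcal{C}(\mathcal{I})$ inherits the bound, giving $L \geq \alpha(\mathcal{I})$. Combining the two inequalities gives $L = \alpha(\mathcal{I})$.

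Finally, to justify the stronger claim that the infimum in the LP is actually attained (so that ``minimize'' is appropriate), observe that $\overline{\mathcal{C}(\mathcal{I})}$ lies in $\R_{\geq 0}^n$, since every $NP(I_m)$ does. Intersecting the closed set $\overline{\mathcal{C}(\mathcal{I})}$ with the compact simplex $\{\by \in \R_{\geq 0}^n \mid y_1+\cdots+y_n \leq \alpha(I_1)\}$ yields a nonempty compact feasible region containing all near-optimal points, on which the continuous objective attains its minimum. The main technical point to watch is the passage from the union $\mathcal{C}(\mathcal{I})$ to its closure, but this is handled uniformly by the continuity argument above; the subadditivity aspect needed to equate the limit with the infimum is already supplied by \Cref{rem:infimum}, so no additional convex-geometric work is required.
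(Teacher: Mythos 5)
Your proof is correct and follows essentially the same route as the paper's: reduce each $\alpha(I_m)/m$ to a minimization over $\frac{1}{m}NP(I_m)$ via \eqref{eq:alphaLP}, use \Cref{rem:infimum} to express $\alpha(\mathcal{I})$ as the infimum of these quantities, and identify the resulting infimum over $\mathcal{C}(\mathcal{I})$ with the minimum over its closure. You spell out the final passage to the closure (by continuity of the objective) and the attainment of the minimum (by intersecting with a compact simplex), steps the paper compresses into a single unexplained equality.
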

\begin{proof}
Recall from \Cref{rem:infimum} the alternate definition $\alpha(\mathcal{I})=\inf_{m\geq 1} \frac{\alpha(I_m)}{m}$.
From \eqref{eq:alphaLP} we deduce 
$
\alpha(I_m)= \min\{y_1+\cdots+y_n \mid (y_1,\cdots, y_n)\in NP(I_m)\}
$, hence there are equalities
\[
\frac{\alpha(I_m)}{m}= \min\{y_1+\cdots+y_n \mid (y_1,\cdots, y_n)\in \frac{1}{m}NP(I_m)\}.
\]
Now passing to the infimum and  denoting the solution of the optimization problem in the statement of the theorem by $\beta$, we deduce 
\begin{eqnarray*}
\alpha(\mathcal{I}) =\inf_{m\geq 1} \frac{\alpha(I_m)}{m}&=& \inf_{m\geq 1}\left \{\min\{y_1+\cdots+y_n \mid (y_1,\cdots, y_n)\in \frac{1}{m}NP(I_m)\}\right\}\\
&=& \inf\{y_1+\cdots+y_n \mid (y_1,\cdots, y_n)\in \bigcup_{m\geq 1} \frac{1}{m}NP(I_m)=\mathcal{C}(\mathcal{I})\}\\
&=& \min\{y_1+\cdots+y_n \mid (y_1,\cdots, y_n)\in\ov{\mathcal{C}(\mathcal{I})}\}=\beta.
\end{eqnarray*}
\end{proof}

Applying this theorem, we are able to recover a result relating the Waldschmidt constant to the symbolic polyhedron from \cite[Corollary 6.3]{cooper2017symbolic} and \cite[Theorem 3.2]{bocci2016waldschmidt}

\begin{cor}
\label{cor:WaldschmidtLP}
The Waldschmidt constant of a monomial ideal $I$ is the solution to the following linear optimization problem with feasible region given by its symbolic polyhedron: 
\begin{center}
\begin{tabular}{rl}
\label{LPWaldschmidt}
\emph{minimize} & $y_1+\cdots+y_n $\\
\emph{subject to} & $(y_1,\cdots, y_n)\in SP(I)$.
\end{tabular}
\end{center}
\end{cor}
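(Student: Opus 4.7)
The plan is to deduce this corollary by directly combining three ingredients already established in the excerpt: the definition of the Waldschmidt constant as the asymptotic initial degree of the family of symbolic powers (\Cref{def:Waldschmidt}), the previous theorem expressing the asymptotic initial degree of any graded family of monomial ideals as the minimum of $y_1+\cdots+y_n$ over the closure of the limiting body $\mathcal{C}(\mathcal{I})$, and \Cref{cor:SPlimiting}, which identifies $SP(I)$ as the asymptotic Newton polyhedron of the graded family $\{I^{(m)}\}_{m\geq 1}$.

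Concretely, I would set $\mathcal{I}=\{I^{(m)}\}_{m\geq 1}$ and note that this is a graded family of monomial ideals, so the previous theorem applies and yields
\[
\widehat{\alpha}(I)=\alpha(\mathcal{I})=\min\{y_1+\cdots+y_n\mid (y_1,\ldots,y_n)\in\overline{\mathcal{C}(\mathcal{I})}\}.
\]
By \Cref{cor:SPlimiting}, $\mathcal{C}(\mathcal{I})=SP(I)$, so the optimization runs over $\overline{SP(I)}$.

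The last step is to observe that $SP(I)$ is already closed, so $\overline{SP(I)}=SP(I)$ and the statement of the corollary follows. This is immediate from \Cref{def:symbolicpolyhedron}: each $NP(Q_{\subseteq P})$ is a finite intersection of closed half-spaces (having rational normals coming from the exponents of minimal generators of $Q_{\subseteq P}$), hence closed, and $SP(I)$ is a finite intersection of such polyhedra.

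I do not anticipate a real obstacle here; the corollary is essentially a synthesis of the preceding results. The only mild subtlety is confirming that the minimum is attained, which is guaranteed because $SP(I)$ is a closed polyhedron in $\R^n_{\geq 0}$ and the linear functional $y_1+\cdots+y_n$ is bounded below by $0$ on it and coercive in the directions of recession, so the infimum is attained at a vertex of $SP(I)$.
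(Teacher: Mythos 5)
Your proof is correct and follows the same route the paper has in mind: the paper states this corollary without an explicit proof, as an immediate consequence of the preceding optimization theorem together with \Cref{cor:SPlimiting}. Your added observation that $SP(I)$ is already closed (being a finite intersection of rational closed half-spaces), so that $\overline{SP(I)}=SP(I)$, is a welcome bit of care in filling in the one detail the paper leaves implicit.
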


\begin{cor}
\label{cor:naiveWaldschmidtLP}
The naive Waldschmidt constant of a monomial ideal $I$ is the solution to the following linear optimization problem with feasible region given by its irreducible polyhedron:
\begin{center}
\begin{tabular}{rl}
\label{LPnaiveWaldschmidt}
\emph{minimize} & $y_1+\cdots+y_n $\\
\emph{subject to} & $(y_1,\cdots, y_n)\in IP(I)$.
\end{tabular}
\end{center}
\end{cor}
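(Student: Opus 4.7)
The plan is to obtain this corollary as an immediate specialization of the preceding theorem on asymptotic initial degrees to the particular graded family $\mathcal{I}=\{I^{\{m\}}\}_{m\geq 1}$ of irreducible powers, exactly paralleling the proof of \Cref{cor:WaldschmidtLP}.

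First, I would invoke \Cref{def:naiveWaldschmidt} to rewrite $\widetilde{\alpha}(I)=\alpha(\mathcal{I})$, where $\mathcal{I}=\{I^{\{m\}}\}_{m\geq 1}$. Since the irreducible powers form a graded family of monomial ideals (as established in the lemma immediately following \Cref{def:irredpower}), the theorem that expresses $\alpha(\mathcal{I})$ as the minimum of $y_1+\cdots+y_n$ over $\overline{\mathcal{C}(\mathcal{I})}$ applies without modification.

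Next, I would identify the feasible region. By \Cref{cor:IPlimiting}, the limiting body of the family of irreducible powers is precisely the irreducible polyhedron, i.e., $\mathcal{C}(\mathcal{I})=IP(I)$. To finish, I only need to verify that $IP(I)$ is already closed, so that $\overline{\mathcal{C}(\mathcal{I})}=IP(I)$ and the symbol $\min$ (rather than $\inf$) in the LP statement is justified. This is immediate from \Cref{rem:eqNP(Q)}, which describes $IP(I)$ as the solution set of a finite system of non-strict linear inequalities in $\R^n$; equivalently, $IP(I)$ is a finite intersection of Newton polyhedra, each of which is a closed convex subset of $\R^n$.

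Assembling these three ingredients, the linear program over $\overline{\mathcal{C}(\mathcal{I})}$ provided by the theorem becomes the linear program over $IP(I)$ stated in the corollary, yielding $\widetilde{\alpha}(I)=\min\{y_1+\cdots+y_n\mid (y_1,\ldots,y_n)\in IP(I)\}$. No step presents a genuine obstacle — the work has already been done in the general theorem and in \Cref{cor:IPlimiting}; this corollary is simply the natural companion to \Cref{cor:WaldschmidtLP}, differing only in which decomposition (irreducible versus combined primary) drives the construction.
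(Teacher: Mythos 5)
Your proposal is correct and matches the paper's intent exactly: the corollary is meant as an immediate specialization of the unnumbered theorem on asymptotic initial degrees to the graded family of irreducible powers, with \Cref{cor:IPlimiting} identifying the limiting body as $IP(I)$, and the observation that $IP(I)$ is closed (being a finite intersection of half-spaces per \Cref{rem:eqNP(Q)}) disposing of the closure bar. The paper omits the proof precisely because it is this routine; your filled-in argument is the right one.
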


From the containments in \Cref{thm:naivecontainment} and the above two corollaries we deduce inequalities relating the various asymptotic initial degrees.

\begin{prop}
\label{prop:ineq}
Every monomial ideal $I$ satisfies the inequality $\widetilde{\alpha}(I)\leq \widehat{\alpha}(I)\leq \alpha(I)$.
\end{prop}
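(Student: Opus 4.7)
The plan is to combine the three preceding results: the containment of polyhedra from Theorem \ref{thm:naivecontainment}, the linear programming descriptions of $\widehat{\alpha}(I)$ and $\widetilde{\alpha}(I)$ from Corollaries \ref{cor:WaldschmidtLP} and \ref{cor:naiveWaldschmidtLP}, and the linear programming description of $\alpha(I)$ from equation \eqref{eq:alphaLP}. All three quantities are minima of the same linear functional $\ell(y_1,\ldots,y_n) = y_1 + \cdots + y_n$, but taken over nested feasible regions $NP(I) \subseteq SP(I) \subseteq IP(I)$. A larger feasible region can only decrease (or preserve) the minimum of $\ell$, so the inequality is immediate from monotonicity of minima.

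In more detail, first I would invoke Theorem \ref{thm:naivecontainment} to record the containment $NP(I) \subseteq SP(I) \subseteq IP(I)$. Next I would observe, using \eqref{eq:alphaLP}, Corollary \ref{cor:WaldschmidtLP}, and Corollary \ref{cor:naiveWaldschmidtLP}, that
\[
\alpha(I) = \min_{\by \in NP(I)} \ell(\by), \qquad \widehat{\alpha}(I) = \min_{\by \in SP(I)} \ell(\by), \qquad \widetilde{\alpha}(I) = \min_{\by \in IP(I)} \ell(\by).
\]
Finally, since enlarging the feasible region can only weaken the constraint, I would conclude
\[
\widetilde{\alpha}(I) = \min_{\by \in IP(I)} \ell(\by) \leq \min_{\by \in SP(I)} \ell(\by) = \widehat{\alpha}(I) \leq \min_{\by \in NP(I)} \ell(\by) = \alpha(I),
\]
which is the desired chain of inequalities.

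There is essentially no obstacle here, as the proposition is a direct corollary of the machinery built in the preceding sections; the only mild subtlety is making sure that the minima in question are all attained, which is guaranteed by the theorem preceding Corollary \ref{cor:WaldschmidtLP} (where the minimum over the closure of the limiting body is shown to exist) together with the fact that $SP(I)$ and $IP(I)$ are closed polyhedra in $\R^n_{\geq 0}$.
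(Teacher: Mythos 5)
Your proof is correct and follows essentially the same route as the paper: invoke the polyhedral containments $NP(I)\subseteq SP(I)\subseteq IP(I)$ from \Cref{thm:naivecontainment}, identify the three quantities as minima of $y_1+\cdots+y_n$ over those nested regions via \eqref{eq:alphaLP}, \Cref{cor:WaldschmidtLP}, and \Cref{cor:naiveWaldschmidtLP}, and conclude by monotonicity of the minimum. (Incidentally, the paper's own wording accidentally swaps which constant goes with which polyhedron, but your assignment is the correct one.)
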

\begin{proof}
 \Cref{thm:naivecontainment} gives $NP(I)\subseteq SP(I) \subseteq IP(I)$ and taking the minimum value of the sum of the coordinates of any point in these convex bodies turns containments into reverse inequalities. These minimum values are $\widetilde{\alpha}(I)$ for $SP(I)$ and $\widehat{\alpha}(I)$ for $IP(I)$ by \Cref{cor:WaldschmidtLP} and
\Cref{cor:naiveWaldschmidtLP} respectively and  $\alpha(I)$ for $NP(I)$ by equation \eqref{eq:alphaLP}.
\end{proof}

%


%

\begin{ex}
\label{ex:4.9}
The inequalities $\widetilde{\alpha}(I)\leq \widehat{\alpha}(I)\leq \alpha(I)$ are in general strict. For the ideal $I=(x^2,xy,y^2)$ in \Cref{ex:1} one finds by applying the above corollaries $\widetilde{\alpha}(I)= \frac{4}{3} < 2=\widehat{\alpha}(I)  =\alpha(I)$. The value of  $\widetilde{\alpha}(I)$ follows by observing that, as illustrated in \Cref{ex:1}, $IP(I)$ has vertices at $(2,0),(0,2)$ and $(\frac{2}{3},\frac{2}{3})$. The latter furnishes the solution to the linear program in \Cref{cor:naiveWaldschmidtLP}. 

For the ideal $I=(xy,xz,yz)$ in \Cref{ex:2} one finds by applying the above corollaries $\widetilde{\alpha}(I)= \widehat{\alpha}(I) =\frac{3}{2}<2 =\alpha(I)$. The value of  $\widehat{\alpha}(I)$ follows by observing that, as illustrated in \Cref{ex:2}, $SP(I)$ has vertices at $(1,1,0), (1,0,1), (0,1,1)$ and $(\frac{1}{2},\frac{1}{2},\frac{1}{2})$. The latter furnishes the solution to the linear program in \Cref{cor:WaldschmidtLP}. 

\end{ex}

Under special circumstances, we may also deduce equality between the asymptotic invariants discussed above.

\begin{prop}
If \(I\) is a monomial ideal whose irredundant irreducible components have distinct radicals, then it satisfies \(\widehat{\alpha}(I)=\widetilde{\alpha}(I)\). In particular, this equality holds when $I$ is square-free.
\end{prop}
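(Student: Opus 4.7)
The plan is to reduce the statement directly to identities already established in the paper. By Remark \ref{ex:sqfree}, whenever the components of the irredundant irreducible decomposition $I=J_1\cap \cdots \cap J_s$ have pairwise distinct radicals, the irreducible powers and the symbolic powers agree termwise: $I^{(m)}=I^{\{m\}}$ for every integer $m\geq 1$. This is the key input, and once we have it, the rest is automatic.

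First, I would observe that an equality of ideals $I^{(m)}=I^{\{m\}}$ forces equality of their initial degrees $\alpha(I^{(m)})=\alpha(I^{\{m\}})$ for every $m$. Dividing by $m$ and passing to the limit (which exists by \Cref{rem:infimum}) then gives
\[
\widehat{\alpha}(I)=\lim_{m\to\infty}\frac{\alpha(I^{(m)})}{m}=\lim_{m\to\infty}\frac{\alpha(I^{\{m\}})}{m}=\widetilde{\alpha}(I),
\]
which is the desired equality. Alternatively, one could route the argument through the convex bodies: since the hypothesis forces $SP(I)=IP(I)$ (the remark immediately following \Cref{def:irredpolyhedron}), the two linear programs from \Cref{cor:WaldschmidtLP} and \Cref{cor:naiveWaldschmidtLP} have the same objective $y_1+\cdots+y_n$ over the same feasible region, hence the same optimal value.

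For the ``in particular'' clause, I would note that when $I$ is square-free each irreducible component $J_i$ is generated by a subset of the variables, so $J_i$ is itself prime and $\sqrt{J_i}=J_i$. The components of an irredundant decomposition are distinct by definition, so the radicals are automatically distinct, and the general case applies.

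There is no substantive obstacle here; the content of the proposition is entirely packaged inside Remark \ref{ex:sqfree}, and the only task is to transport the ideal-level equality to the level of asymptotic invariants via either the definitions or the LP characterizations.
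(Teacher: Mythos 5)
Your proof is correct, and your alternative route (via $SP(I)=IP(I)$ and the two linear programs \Cref{cor:WaldschmidtLP}, \Cref{cor:naiveWaldschmidtLP}) is exactly the argument the paper gives. Your primary route is a mild but genuine simplification: rather than passing through the polyhedral/LP characterizations, you invoke \Cref{ex:sqfree} to get the termwise equality of ideals $I^{(m)}=I^{\{m\}}$ and then compare the limits in \Cref{def:Waldschmidt} and \Cref{def:naiveWaldschmidt} directly. That version needs nothing beyond the definitions of the two asymptotic invariants, so it is more elementary; the paper's LP formulation is equivalent and is chosen presumably to stay consistent with the convex-geometry framing used throughout the section. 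Both are sound, and your handling of the square-free special case (irreducible components generated by subsets of the variables are prime, hence equal to their own radicals, and irredundancy forces distinctness) is also correct.
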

\begin{proof}
The equality follows from \Cref{cor:WaldschmidtLP} and
\Cref{cor:naiveWaldschmidtLP}   after noticing that $SP(I)=IP(I)$ under the given hypothesis, according to \Cref{ex:sqfree}.\end{proof}

\subsection{Lower bounds on asymptotic initial degrees}

\Cref{prop:ineq} establishes that the initial degree of $I$ is an upper bound for both $\widetilde{\alpha}(I)$ and $\widehat{\alpha}(I)$. This upper bound is attained, for example, when $I$ is an irreducible monomial ideal, hence a complete intersection, and thus $I^{\{m\}}=I^{(m)}=I^m$ for each integer $m\geq 1$. 

We now discuss lower bounds for the asymptotic invariants $\widetilde{\alpha}(I)$ and $\widehat{\alpha}(I)$. These are formulated in terms of the initial degree of $I$ and an invariant termed {\em big-height}, which is defined as follows:
\[
\bight(I)=\max\{\het(P)\mid P\in \Ass(R/I)\}.
\]
This invariant can be computed from an irredundant primary decomposition and in particular also from an irredundant irreducible decomposition of $I$ as the maximum height of the primary, respectively irreducible, ideals appearing in the decomposition.

For the Waldschmidt constant the following lower bounds are either known or conjectured to be true. An inequality similar to \Cref{prop:Skoda} first appeared in \cite{Skoda, Waldschmidt} and was proven in the generality given here in \cite{HaHu}.

\begin{prop}[Skoda bound]
\label{prop:Skoda}
Every homogeneous ideal $I$ satisfies the following inequality 
\[
\widehat{\alpha}(I)\geq \frac{\alpha(I)}{\bight(I)}.
\]
\end{prop}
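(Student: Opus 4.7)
The plan is to reduce the inequality to the uniform symbolic-to-ordinary power containment established in \cite{HaHu}. Write $h=\bight(I)$ and $\alpha = \alpha(I)$. The key external input I would invoke is that for every integer $m\geq 1$,
\[
I^{(hm)} \;\subseteq\; I^m.
\]
For radical ideals this is the Ein--Lazarsfeld--Smith / Hochster--Huneke containment theorem; the version needed here for an arbitrary homogeneous ideal, with big-height in place of ordinary height, is precisely the generalization proved in \cite{HaHu}.

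Given this containment, the remaining steps are bookkeeping. Since $I^m$ is generated in degrees at least $m\alpha$, one has $\alpha(I^m) = m\alpha$, and consequently
\[
\alpha(I^{(hm)}) \;\geq\; \alpha(I^m) \;=\; m\alpha.
\]
Dividing by $hm$ gives $\alpha(I^{(hm)})/(hm) \geq \alpha/h$ for every $m\geq 1$. By \Cref{def:Waldschmidt} the sequence $\alpha(I^{(m)})/m$ converges to $\widehat{\alpha}(I)$, and by \Cref{rem:infimum} this limit equals the infimum of the sequence; in particular it coincides with the limit taken along the subsequence indexed by $hm$. Passing to the limit in the displayed inequality therefore yields $\widehat{\alpha}(I) \geq \alpha/h = \alpha(I)/\bight(I)$.

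The main obstacle is, of course, the containment $I^{(hm)} \subseteq I^m$ itself, which is the deep result and the reason this proposition is attributed to \cite{Skoda, Waldschmidt, HaHu}. Once it is cited, the derivation of the bound on $\widehat{\alpha}(I)$ is essentially formal, relying only on the sub-additivity of $\alpha(I^{(m)})$ packaged in \Cref{def:Waldschmidt} and \Cref{rem:infimum}.
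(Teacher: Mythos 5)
The paper states \Cref{prop:Skoda} as a known result with a citation and gives no proof of its own, so there is nothing internal to compare against; but your reconstruction is correct and is precisely the standard derivation that the cited sources use. From the uniform containment $I^{(hm)}\subseteq I^m$ with $h=\bight(I)$ one gets $\alpha(I^{(hm)})\geq\alpha(I^m)=m\,\alpha(I)$, hence $\alpha(I^{(hm)})/(hm)\geq\alpha(I)/h$ for every $m$, and since the limit in \Cref{def:Waldschmidt} exists (by \Cref{rem:infimum}, it is an infimum of a subadditive sequence), it agrees with the limit along the subsequence indexed by $hm$; this gives $\widehat\alpha(I)\geq\alpha(I)/\bight(I)$. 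Note also that $\alpha(I^m)=m\,\alpha(I)$ is genuinely an equality here, not just an inequality, since $f^m\in I^m$ for $f\in I$ of degree $\alpha(I)$; you state this correctly. One small attribution point: the containment $I^{(hm)}\subseteq I^m$ for an arbitrary ideal of a regular ring containing a field, with $h$ the big height and symbolic powers taken over all associated primes as in \Cref{def:symbolicpower}, is already in Hochster--Huneke (building on Ein--Lazarsfeld--Smith); the role of \cite{HaHu} is to extract Waldschmidt-constant bounds of exactly this shape from such containments, which is why the paper credits \cite{HaHu} with ``the generality given here.'' Your phrasing attributes the big-height containment itself to \cite{HaHu}, which slightly overstates what that reference adds, but the mathematics of your proof is sound.
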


The following conjecture proposing a stronger bound has been formulated in \cite[Conjecture 6.6]{cooper2017symbolic}.
\begin{conj}[Chudnovsky bound]
\label{conj:Chudmon}
Every monomial ideal $I$ satisfies the inequality 
\[
\widehat{\alpha}(I)\geq \frac{\alpha(I)+\bight(I)-1}{\bight(I)}.
\]
\end{conj}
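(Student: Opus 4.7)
The plan is to reduce the conjecture to a linear programming problem and to pursue the bound through the intermediate quantity $\widetilde{\alpha}(I)$, exploiting the chain $\widehat{\alpha}(I)\geq\widetilde{\alpha}(I)$ from \Cref{prop:ineq}. This detour is attractive because, by \Cref{cor:naiveWaldschmidtLP}, the naive Waldschmidt constant is the optimal value of
\begin{equation*}
\min\Bigl\{\,y_1+\cdots+y_n \;:\; \by\in IP(I)\,\Bigr\},
\end{equation*}
whose feasible region admits an explicit finite facet description via \Cref{rem:eqNP(Q)}: one constraint $\sum_{j=1}^{h_i}y_{i_j}/a_{i_j}\geq 1$ per irreducible component $J_i$, together with nonnegativity, where $h_i\leq\bight(I)$. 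No comparably clean presentation is available for $SP(I)$, so working through $\widetilde{\alpha}$ provides a concrete combinatorial handle that the symbolic polyhedron does not.

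Concretely, I would fix a monomial irreducible decomposition $I=J_1\cap\cdots\cap J_s$ and pass to the LP dual. Producing a lower bound of $(\alpha(I)+\bight(I)-1)/\bight(I)$ for $\widetilde{\alpha}(I)$ amounts to exhibiting nonnegative weights $\lambda_1,\ldots,\lambda_s$ obeying $\sum_{i\,:\,x_k\in J_i}\lambda_i/a_{i,k}\leq 1$ for every variable $x_k$ while $\sum_i\lambda_i\geq(\alpha(I)+\bight(I)-1)/\bight(I)$. Dually, one would inspect an optimal vertex $\by^{\ast}$ of the primal: since such a vertex lies on $n$ active bounding hyperplanes, one can reason about the combinatorial pattern of which irreducible-component constraints are active and which coordinate hyperplanes $y_k=0$ are active. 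The key input is that a transversal selection of one generator from each component in a covering subfamily produces a monomial in $I$ of degree at least $\alpha(I)$; combining this observation with an averaging argument over the active constraints should upgrade the Skoda estimate $\alpha(I)/\bight(I)$ to a Chudnovsky-type estimate. A case analysis governed by the residue class of $\alpha(I)-1$ modulo $\bight(I)$ is likely to be needed, mirroring the structure of the bound in \Cref{LowerBoundThm}.

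The principal obstacle, and the reason \Cref{conj:Chudmon} has remained open, is that $\alpha(I)$ is a coarse aggregate invariant: although $I\subseteq J_i$ forces $\alpha(J_i)\leq \alpha(I)$, the individual pure-power generators of any single irreducible component can have exponents far below $\alpha(I)$, so the lower bound on $\alpha(I)$ does not translate into a pointwise bound on the data $a_{i,j}$ feeding the LP. Consequently, without a clever combinatorial mechanism to transport the slack across multiple components simultaneously, one ends up with a weaker bound in which $\bight(I)$ is replaced by the ambient dimension $n$ together with a floor function, which is precisely what \Cref{LowerBoundThm} achieves. Extending from that weaker estimate to the full form of \Cref{conj:Chudmon} would require a finer analysis that exploits the interaction of $\alpha(I)$ with the irreducible decomposition in a manner that current LP-duality methods do not appear to capture.
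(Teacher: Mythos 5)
The statement you are asked about is labeled as a \emph{conjecture} in the paper and is not proved there; the paper only cites \cite[Theorem 5.3]{bocci2016waldschmidt} for the square-free case and observes at the very end that the conjecture trivially holds when $\m_n\in\Ass(I)$ (since then $\widehat{\alpha}(I)=\alpha(I)$). To your credit, your write-up does acknowledge that your strategy falls short of the full conjecture. However, there is a more fundamental obstruction than the one you name: the detour through $\widetilde{\alpha}(I)$ cannot possibly yield \Cref{conj:Chudmon}, because the paper exhibits an explicit counterexample to the intermediate inequality you would need. Immediately after \Cref{SkodaTypeBound}, the paper considers $I=(x^2,xy,y^2)=(x^2,y)\cap(x,y^2)\subseteq k[x,y]$ and computes $\widetilde{\alpha}(I)=4/3$ while $\bigl(\alpha(I)+\bight(I)-1\bigr)/\bight(I)=3/2$, so
\[
\widetilde{\alpha}(I)=\tfrac{4}{3}<\tfrac{3}{2}=\tfrac{\alpha(I)+\bight(I)-1}{\bight(I)}.
\]
Thus the inequality $\widetilde{\alpha}(I)\geq(\alpha(I)+\bight(I)-1)/\bight(I)$ is simply false, and no amount of cleverness in the LP analysis of $IP(I)$ can salvage the route $\widehat{\alpha}(I)\geq\widetilde{\alpha}(I)\geq(\alpha(I)+\bight(I)-1)/\bight(I)$. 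You frame the obstacle as the coarseness of $\alpha(I)$ and suggest a ``finer analysis'' of the irreducible decomposition might succeed; in fact the wall is definitive, not technical. This is precisely why the paper introduces the weaker \Cref{conj:weakChudnovski} with a floor function and the $n$ in place of $\bight(I)$, and why \Cref{LowerBoundThm} is stated as a result about $\widetilde{\alpha}$ rather than as progress on \Cref{conj:Chudmon} itself. Any genuine attack on the Chudnovsky bound for $\widehat{\alpha}(I)$ must work directly with the symbolic polyhedron $SP(I)$ (or with some other refinement capturing the embedded structure that $IP(I)$ forgets), not with $IP(I)$.
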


The Chudnovsky bound in \Cref{conj:Chudmon} is known to hold true for square-free monomial ideals cf. \cite[Theorem 5.3]{bocci2016waldschmidt}. 

We now proceed to establish lower bounds for the asymptotic irreducible degree $\widetilde{\alpha}(I)$, by analogy to the bounds discussed above for $\widehat{\alpha}(I)$. First we prove a Skoda-type lower bound.

\begin{thm}
\label{SkodaTypeBound}
Every monomial ideal $I$ satisfies the inequality $\widetilde{\alpha}(I)\geq\frac{\alpha(I)}{\bight(I)}$.
\end{thm}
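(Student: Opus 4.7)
The plan is to combine the LP description of $\widetilde{\alpha}(I)$ from Corollary \ref{cor:naiveWaldschmidtLP} with a rounding construction that converts an arbitrary feasible point of $IP(I)$ into a genuine monomial of $I$. Specifically, I would fix an irredundant monomial irreducible decomposition $I = J_1 \cap \cdots \cap J_s$ with $J_i = (x_{i1}^{a_{i1}}, \ldots, x_{ih_i}^{a_{ih_i}})$ so that $h := \bight(I) = \max_i h_i$, and then use Corollary \ref{cor:naiveWaldschmidtLP} to reduce the task to showing $\sum_{k=1}^n y_k \geq \alpha(I)/h$ for every $y \in IP(I)$.

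The key step would be a pigeonhole argument applied to the bounding inequalities from Lemma \ref{rem:eqNP(Q)}. For each $i$ the inequality $\sum_{j=1}^{h_i} y_{ij}/a_{ij} \geq 1$ is a sum of at most $h$ nonnegative terms, so some index $j_i$ satisfies $y_{i,j_i}/a_{i,j_i} \geq 1/h_i \geq 1/h$, equivalently $h y_{i,j_i} \geq a_{i,j_i}$. Writing $k_i$ for the global index of $x_{i,j_i}$ and using the integrality of $a_{i,j_i}$, this inequality would survive the floor function, giving $\lfloor h y_{k_i} \rfloor \geq a_{i,j_i}$. I would then form the integer exponent vector $\ba' = (\lfloor h y_1 \rfloor, \ldots, \lfloor h y_n \rfloor)$; since the generator $x_{i,j_i}^{a_{i,j_i}}$ of $J_i$ divides $\bx^{\ba'}$, the monomial $\bx^{\ba'}$ would lie in $J_i$, and since this holds for every $i$ independently, $\bx^{\ba'} \in I$.

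The conclusion would follow by comparing degrees: $|\ba'| \geq \alpha(I)$ because $\bx^{\ba'}$ is a monomial in $I$, while $|\ba'| = \sum_k \lfloor h y_k \rfloor \leq h \sum_k y_k$; minimizing over $IP(I)$ then yields $\widetilde{\alpha}(I) \geq \alpha(I)/h$. The main obstacle is pairing the pigeonhole step with the floor operation correctly. The pigeonhole produces a separate generator certification for each $J_i$, and this is essential: the chosen global indices $k_i$ may coincide across different components, so an approach attempting to witness membership in $I$ via a single product $\prod_i x_{k_i}^{a_{i,j_i}}$ would suffer a double-counting issue. The virtue of the floored vector $\ba'$ is precisely that it certifies membership in each $J_i$ independently using the pure-power structure of its generators, thereby bypassing any simultaneous covering obstruction.
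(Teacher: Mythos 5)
Your proof is correct, and it is genuinely different from the one in the paper. The paper adapts the argument of Bocci et al.\ (Theorem 5.3 of \cite{bocci2016waldschmidt}): starting from a feasible point $\bt \in IP(I)$, it extracts from a single bounding inequality one coordinate $t_{k_1}$ with $t_{k_1} \geq a_{k_1}/e$, then iteratively checks whether the ``current monomial'' $x_{k_1}^{a_{k_1}}\cdots x_{k_m}^{a_{k_m}}$ lies in $I$, and if not, locates a component $J_i$ excluding it, extracts a new coordinate (or upgrades an existing one via a careful case split), and tracks the strict growth of $a_{k_1}+\cdots+a_{k_m}$ until it reaches $\alpha(I)$. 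Your argument replaces this iteration and bookkeeping with a single global rounding step: from $\by \in IP(I)$, form $\ba' = (\lfloor h y_1\rfloor, \ldots, \lfloor h y_n\rfloor)$, use pigeonhole on each bounding inequality together with the integrality of the exponents $a_{ij}$ to see that $\ba'$ dominates, coordinatewise, one pure-power generator of every $J_i$, hence $\bx^{\ba'} \in I$, and then compare $\alpha(I) \leq \deg \bx^{\ba'} = \sum_k \lfloor h y_k\rfloor \leq h\sum_k y_k$. The key observation that makes your route work, and which you correctly flag as essential, is that membership of a monomial in the intersection $\bigcap_i J_i$ can be certified one component at a time because each $J_i$ is generated by pure powers of variables; the floor vector records the strongest certificate for each coordinate simultaneously, so collisions among the $k_i$ across different components cause no double-counting. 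Your argument is cleaner (no induction, no case analysis on whether the new index is fresh) and makes the role of $\bight(I) = \max_i h_i$ completely transparent in the pigeonhole step, whereas the paper's argument has the advantage of running verbatim from the earlier square-free case with no new geometric observation needed.
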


\begin{proof} We proceed by adapting the proof of \cite[Theorem 5.3]{bocci2016waldschmidt}. 

Let $I$ be a monomial ideal with  $\bight(I)=e$ and irredundant irreducible decomposition $I = J_1 \cap \cdots \cap J_s$. We know from \Cref{cor:naiveWaldschmidtLP} that $\widetilde{\alpha}(I)$ is the minimum value of $y_1 + \cdots + y_n$ over $IP(I)$ and from \Cref{rem:eqNP(Q)} that, if for each $i = 1, \dotsc, s$ $J_i=(x_{1}^{a_{i1}},\ldots, x_{n}^{a_{i n}})$, then the bounding inequalities for this polyhedron are
$$
IP(I)=\begin{cases}
\frac{1}{a_{11}}y_{1} + \cdots + \frac{1}{a_{1n}} y_{n} \geq 1\\
\cdots \\
\frac{1}{a_{s1}}y_{1} + \cdots + \frac{1}{a_{sn}} y_{n} \geq 1\\
y_1,\ldots, y_n\geq 0
\end{cases}.
$$

To establish the claim, if suffices to show that, for every ${\bf t} \in IP(I)$, we have $$t_1 + \cdots + t_n \geq \frac{\alpha(I)}{\bight(I)}= \frac{\alpha(I)}{e}$$ which implies by taking infimums that $\widetilde{\alpha}(I)$, the minimal value of the sum of coordinates of any point in $IP(I)$, will  satisfy the desired inequality. 

We find a subset of the components of ${\bf t}$ whose sum is greater or equal to $\alpha(I)/e$. 

To start, consider a bounding inequality corresponding to an irreducible component $J_{i_1}$. This takes the form 
\[ \frac{1}{a_{i_11}}y_{1} + \cdots + \frac{1}{a_{i_1n}} y_{n} \geq 1,\]
where  the number of $y_i$ whose coefficients are non zero in the preceding inequality is the height of the monomial prime ideal $\sqrt{J_i}$, thus at most $e$. 
The displayed inequality thus implies that for $\by=\bt$ at least one of the terms is greater or equal to $\frac{1}{e}$, i.e., 
 \[t_{k_1} \geq \frac{a_{i_1k_1}}{e} \text{ for some } 1\leq k\leq n  \text{ and some integer } a_{i_1k_1}\geq 1. 
 \]
 
Now, suppose we have found $t_{k_1}, t_{k_2}, \dots, t_{k_m}$ such that $t_{k_1} \geq \frac{a_{i_1k_1}}{e}$, ..., $t_{k_m} \geq \frac{a_{i_mk_m}}{e}$, but we  have $a_{i_1k_1}+ a_{i_2k_2} + \cdots + a_{i_m k_m} < \alpha(I)$. Consider the monomial $x_{k_1}^{a_{i_1k_1}} x_{k_2}^{a_{i_1k_2}} \cdots x_{k_m}^{a_{i_mk_m}}$. By the assumption, it has degree smaller than $\alpha(I)$, so it's not an element of $I$. Therefore, there is some component $J_{i_{m+1}}$ that does not contain this monomial. Repeating the previous argument, from the corresponding inequality we obtain  $t_{k_{m+1}} \geq \frac{a_{i_{m+1}k_{m+1}}}{e}$ for some $a_{i_{m+1}k_{m+1}}\geq 1$. There are two possibilities depending on whether $k_{m+1}\in \{k_1,\ldots, k_m\}$ or not:

\begin{enumerate}
    \item $t_{k_{m+1}}$ is not one of $t_{k_1}, t_{k_2}, \dots, t_{k_m}$. Then we observe that 
    \[a_{i_1k_1}+ a_{i_2k_2} + \cdots + a_{i_mk_m} + a_{i_{m+1}k_{m+1}} > a_{i_1k_1}+ a_{i_2k_2} + \cdots + a_{i_mk_m} .\]
    
    \item $t_{k_{m+1}}$ is one of $t_{k_1}, t_{k_2}, \dots, t_{k_m}$, say $t_{k_{m+1}} = t_{k_\ell}$. Since the monomial $x_{k_1}^{a_{i_1k_1}} x_{k_2}^{a_{i_2k_2}} \cdots x_{k_m}^{a_{i_mk_m}}$  is not contained in $J_{i_{m+1}}$, it must be that $a_{i_{m+1}k_{m+1}}> a_{i_\ell k_\ell}$. Therefore, we can replace the inequality $t_{k_\ell} \geq \frac{a_{i_\ell k_\ell}}{e}$ by the stronger inequality $t_{k_{m+1}}\geq \frac{a_{i_{m+1}k_{m+1}}}{e}$. Specifically, redefining $i_\ell:=i_{m+1}$ and $k_\ell:=k_{m+1}$ and thus $a_{i_\ell k_\ell}:=a_{i_{m+1} k_{m+1}}$ increases the value of the sum
     $a_{i_1k_1}+ a_{i_2k_2} + \cdots + a_{k_m}$.
      \end{enumerate}
 Since in either case the value of the sum  $a_{i_1k_1}+ a_{i_2k_2} + \cdots + a_{i_mk_m} + a_{i_{m+1}k_{m+1}}$ or $a_{i_1k_1}+ a_{i_2k_2} + \cdots + a_{i_mk_m}$ increases, we see that iterating this procedure eventually results in positive integers $a_{i_1k_1}, a_{i_2k_2}, \cdots + a_{i_mk_m} $ such that 
 \[a_{i_1k_1}+ a_{i_2k_2} + \cdots + a_{i_mk_m} \geq \alpha(I)\] as well as in a corresponding set of coordinates of ${\bf t}$  that  satisfy the desired inequality
$$t_{k_1} + \cdots + t_{k_m} \geq \frac{a_{i_1k_1}+ a_{i_2k_2} + \cdots + a_{i_mk_m} }{e} \geq \frac{\alpha(I)}{e}.$$
\end{proof}

We remark that the direct analogue of the Chudnovsky bound in \Cref{conj:Chudmon} fails for $\widetilde{\alpha}(I)$, as shown by the following example.

\begin{ex}
Consider the ideal $I=(x^2,xy,y^2)=(x^2,y)\cap(x,y^2)\subseteq k[x,y]$. The  initial degree is $\alpha(I)=2$, the big height is $\bight(I)=2$ and the naive Waldschmidt constant is $\widetilde{\alpha}(I)=\frac{4}{3}$ per \Cref{ex:4.9}. This gives an inequality
\[
\widetilde{\alpha}(I)=\frac{4}{3}<\frac{3}{2}=\frac{\alpha(I)+\bight(I)-1}{\bight(I)}.
\]
\end{ex}

However, there are many  ideals for which the expression in the Chudnovsky conjecture \Cref{conj:Chudmon} does indeed provide a lower bound on $\widetilde{\alpha}(I)$. In the next section we give a modified Chudnovsky-type lower bound for $\widetilde{\alpha}(I)$ that applies to all monomial ideals $I$.

\subsection{Powers of the maximal ideal}

In this section we determine the naive Waldschmidt constant for the powers of the homogeneous maximal ideal.
We will later use this to deduce a Chudnovsky-type lower bound on the naive Waldschmidt constant of ideals primary to the homogeneous maximal ideal.

In the following, we denote by $\mathfrak{m}_n$ the homogeneous maximal ideal $(x_1,\ldots, x_n)$ of the polynomial ring  $R=k[x_1,\dots,x_n]$. We start by establishing the irredundant irreducible decompositions for the ordinary powers of $\m_n$.
\begin{notation}
For each positive integer $s$ we denote by $P_n(s)$ be the set of partitions of $s$ into $n$ nonempty parts
$$
P_n(s)=\left\{(a_1,\dots,a_n)\left|\,a_i\in\N,a_i\geq1,\sum_{i=1}^n a_i=s\right.\right\}.
$$
\end{notation}


\begin{prop}
\label{homomaxidealQdecomp}
Given an integer $d\geq 1$, the irredundant irreducible decomposition of the ideal $\mathfrak{m}_n^d=(x_1,\ldots,x_n)^d$ is
\begin{equation}
\label{eq:irreddecomp}
\mathfrak{m}_n^d=\bigcap_{(a_1,\dots,a_n)\in P_n(d+n-1)}(x_1^{a_1},\dots,x_n^{a_n})
\end{equation}
\end{prop}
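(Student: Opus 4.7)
The plan is to verify that \eqref{eq:irreddecomp} is an irreducible decomposition by a double containment argument at the level of monomials, and then to certify irredundancy by exhibiting, for each partition, a witness monomial that survives only if that component is kept.

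First I would set up the framework. Each ideal on both sides of \eqref{eq:irreddecomp} is monomial, so equality can be checked on exponent vectors. A monomial $\bx^\bb=x_1^{b_1}\cdots x_n^{b_n}$ lies in $\mathfrak{m}_n^d$ iff $b_1+\cdots+b_n\geq d$, and it lies in the irreducible ideal $(x_1^{a_1},\ldots,x_n^{a_n})$ iff $b_i\geq a_i$ for some $i$. So the equality in \eqref{eq:irreddecomp} reduces to proving the following combinatorial statement: for $\bb\in\N^n$,
\[
b_1+\cdots+b_n\geq d \;\Longleftrightarrow\; \text{for every } (a_1,\ldots,a_n)\in P_n(d+n-1),\ \exists\, i\ \text{with } b_i\geq a_i.
\]

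The forward direction is the easy counting step: if some partition $(a_1,\ldots,a_n)$ dominated $\bb$ strictly coordinate-wise (i.e., $b_i\leq a_i-1$ for all $i$), then $\sum b_i\leq\sum(a_i-1)=(d+n-1)-n=d-1$, contradicting $\sum b_i\geq d$. For the reverse direction I would argue by contrapositive: given $\bb$ with $\sum b_i\leq d-1$, I must exhibit a partition strictly dominating $\bb$. The natural candidate is $a_i:=b_i+1$ for $i=1,\ldots,n-1$ and $a_n:=d+n-1-\sum_{i<n}(b_i+1)=d-\sum_{i<n}b_i$; the hypothesis $\sum b_i\leq d-1$ forces $a_n\geq b_n+1\geq 1$, so $(a_1,\ldots,a_n)\in P_n(d+n-1)$ strictly dominates $\bb$, as required.

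For irredundancy, I would fix a partition $(a_1,\ldots,a_n)\in P_n(d+n-1)$ and test the monomial $\bx^{\ba-\mathbf{1}}=x_1^{a_1-1}\cdots x_n^{a_n-1}$, which has degree $d-1$, hence is \emph{not} in $\mathfrak{m}_n^d$. It suffices to check that $\bx^{\ba-\mathbf{1}}$ belongs to every other component $(x_1^{a'_1},\ldots,x_n^{a'_n})$ in the proposed decomposition: since $\sum a'_i=\sum a_i$ and $\ba'\ne\ba$, some coordinate satisfies $a'_i<a_i$, i.e., $a_i-1\geq a'_i$, so the monomial lies in that component. Thus omitting $(x_1^{a_1},\ldots,x_n^{a_n})$ would enlarge the intersection beyond $\mathfrak{m}_n^d$, proving the decomposition \eqref{eq:irreddecomp} is irredundant.

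The whole argument is elementary; the only mild obstacle is choosing the right witness. The choice $\ba-\mathbf{1}$ is forced by wanting a degree $d-1$ monomial that sits on the ``boundary'' of every competing component simultaneously, and once this witness is identified, both containments and irredundancy follow from bookkeeping on the partition sum $d+n-1$.
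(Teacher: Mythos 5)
Your proof of the decomposition equality is essentially the same as the paper's: both directions are handled by the same contradiction/contrapositive scheme, and your choice of dominating partition $a_i=b_i+1$ ($i<n$), $a_n=d-\sum_{i<n}b_i$ is literally identical to the paper's $a_n=b_n+d-\sum_{i=1}^n b_i$. The one genuine addition on your side is the irredundancy verification via the witness $\bx^{\ba-\mathbf{1}}$, which the paper's proof actually omits even though the proposition asserts irredundancy. Your witness argument is correct: $\bx^{\ba-\mathbf{1}}$ has degree $d-1$ (so is outside $\m_n^d$), lies outside $(x_1^{a_1},\ldots,x_n^{a_n})$ since every exponent is strictly short, yet lies inside every other component $(x_1^{a'_1},\ldots,x_n^{a'_n})$ because equal total weight with $\ba'\neq\ba$ forces some $a'_i\leq a_i-1$. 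This tidies up a small gap in the paper's exposition; otherwise the two arguments coincide.
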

\begin{proof}
Let $\textbf{x}^\textbf{b}=x_1^{b_1}\cdots x_n^{b_n}\in\bigcap_{(a_1,\dots,a_n)\in P_n(d+n-1)}(x_1^{a_1},\dots,x_n^{a_n})$, and suppose $\textbf{x}^\textbf{b}\not\in\mathfrak{m}_n^d$. Then there are inequalities
$$
\sum_{i=1}^n b_i<d \qquad\text{ and thus }\qquad d-\sum_{i=1}^n b_i\geq 1.
$$
Let
$$
a_i=
\begin{cases}
    b_i+1&1\leq i<n\\
    b_n+d-\sum_{i=1}^{n} b_i&i=n
\end{cases}
$$
which implies $a_i\geq b_i+1$ for all $1\leq i\leq n$. From this, we have an equality
\[
\sum_{i=1}^n a_i=\sum_{i=1}^{n-1}(b_i+1)+b_n+d-\sum_{i=1}^{n} b_i=\sum_{i=1}^{n}b_i+n-1+d-\sum_{i=1}^{n}b_i=d+n-1
\]
and hence $(a_1,\dots,a_n)\in P_n(d+n-1)$. But since $a_i>b_i$ for all $i$, $\textbf{x}^\textbf{b}\not\in(x_1^{a_1},\dots,x_n^{a_n})$, a contradiction. As a result, we obtain the containment
\begin{equation}
\label{eq:1.1}
    \bigcap_{(a_1,\dots,a_n)\in P_n(d+n-1)}(x_1^{a_1},\dots,x_n^{a_n})
    \subseteq \mathfrak{m}_n^d
\end{equation}
Now take $\textbf{x}^\textbf{b}\in\mathfrak{m}_n^d$, and suppose $\textbf{x}^\textbf{b}\not\in \bigcap_{(a_1,\dots,a_n)\in P_n(d+n-1)}(x_1^{a_1},\dots,x_n^{a_n})$. Then there is some $Q=(x_1^{c_1},\dots,x_n^{c_n})$ with $(c_1,\dots,c_n)\in P_n(d+n-1)$ such that $\textbf{x}^\textbf{b}\not\in Q$. This implies that $c_i>b_i$ for all $1\leq i\leq n$, so $c_i\geq b_i+1$. But then we deduce
\[
d+n-1=\sum_{i=1}^n c_i
\geq \sum_{i=1}^n (b_i+1)
=n+\sum_{i=1}^n b_i \geq n+d
>d+n-1,
\]
which is of course a contradiction. Hence
\begin{equation}
\label{eq:1.2}
\mathfrak{m}_n^d\subseteq\bigcap_{(a_1,\dots,a_n)\in P_n(d+n-1)}(x_1^{a_1},\dots,x_n^{a_n})
\end{equation}
Combining \eqref{eq:1.1} and \eqref{eq:1.2}, we obtain our desired result.
\end{proof}

Having determined the irredundant irreducible decomposition of $\m_n^d$, we deduce the bounding inequalities for the irreducible polyhedron from \Cref{rem:eqNP(Q)}.

\begin{cor}
\label{cor:boundingIPmnd}
The irreducible polyhedron of the ideal $\m_n^d$ is given by the inequalities
\[
\begin{cases}
\frac{1}{a_1}y_1+\cdots+ \frac{1}{a_n}y_n\geq 1 &\text{ for } (a_1,\dots,a_n)\in P_n(d+n-1)\\
y_i\geq 0 &\text{ for } 1\leq i\leq n.
\end{cases}
\]
\end{cor}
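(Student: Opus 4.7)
The plan is to obtain this as an immediate consequence of the two results that precede it. The preceding \Cref{homomaxidealQdecomp} supplies the irredundant irreducible decomposition
\[
\mathfrak{m}_n^d=\bigcap_{(a_1,\dots,a_n)\in P_n(d+n-1)}(x_1^{a_1},\dots,x_n^{a_n}),
\]
and \Cref{rem:eqNP(Q)} tells us exactly how to read bounding inequalities for the irreducible polyhedron $IP(\m_n^d)$ off of any monomial irreducible decomposition: each component of the form $(x_{i_1}^{a_{i_1}},\ldots,x_{i_h}^{a_{i_h}})$ contributes the half-space $\sum_j \tfrac{1}{a_{i_j}}y_{i_j}\geq 1$, together with the common nonnegativity constraints $y_i\geq 0$.

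Concretely, my proof will consist of a single short paragraph. I will invoke \Cref{homomaxidealQdecomp} to write $\mathfrak{m}_n^d=\bigcap_{\mathbf{a}\in P_n(d+n-1)}(x_1^{a_1},\dots,x_n^{a_n})$. Observe that every component in this decomposition involves all $n$ variables (since each $a_i\geq 1$ by definition of $P_n(d+n-1)$), so for each such $\mathbf{a}=(a_1,\ldots,a_n)$ the indexing rule in \Cref{rem:eqNP(Q)} simply matches $x_{ij}$ to $x_j$, and the corresponding bounding inequality is $\frac{1}{a_1}y_1+\cdots+\frac{1}{a_n}y_n\geq 1$. Collecting one such inequality per partition $\mathbf{a}\in P_n(d+n-1)$, together with the nonnegativity conditions $y_i\geq 0$ that \Cref{rem:eqNP(Q)} always contributes, gives precisely the claimed system.

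There is essentially no obstacle here; the only minor thing to be careful about is ensuring that each irreducible component in the decomposition does in fact involve all $n$ variables so that the $y_{ij}$-to-$y_k$ relabeling of \Cref{rem:eqNP(Q)} is the identity, and this is guaranteed by the definition of $P_n(d+n-1)$ which requires every part to be at least $1$. No other computation is needed.
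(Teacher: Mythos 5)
Your proof is correct and takes exactly the route the paper intends: the corollary is stated immediately after the sentence ``Having determined the irredundant irreducible decomposition of $\m_n^d$, we deduce the bounding inequalities for the irreducible polyhedron from \Cref{rem:eqNP(Q)},'' so it is simply the combination of \Cref{homomaxidealQdecomp} and \Cref{rem:eqNP(Q)}. Your added observation that every part of each partition in $P_n(d+n-1)$ is at least $1$, so each component involves all $n$ variables and the relabeling is trivial, is a sensible detail that the paper leaves implicit.
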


Next we give closed formulas for the naive Waldschmidt constant for the powers of the maximal ideal. We first single out the case when this value is an integer.

\begin{prop}
\label{mod1formula}
Suppose $d\equiv 1\mod n$. Then the naive Waldschmidt constant of $\m_n^d$ is 
$$
\widetilde{\alpha}(\mathfrak{m}_n^d)=\frac{d+n-1}{n}\in\mathbb{N}.
$$
\end{prop}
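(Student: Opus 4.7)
My plan is to solve the linear optimization problem of \Cref{cor:naiveWaldschmidtLP} applied to $I = \m_n^d$. Set $k := \frac{d+n-1}{n}$; the hypothesis $d\equiv 1 \pmod n$ is exactly what guarantees $k\in \N_{\geq 1}$, so that the \emph{balanced} tuple $(k,k,\ldots,k)$ belongs to $P_n(d+n-1)$. I will combine this observation with the explicit description of the bounding inequalities of $IP(\m_n^d)$ given by \Cref{cor:boundingIPmnd} to pin down matching lower and upper bounds equal to $k$.

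For the lower bound, the constraint in \Cref{cor:boundingIPmnd} associated with the balanced partition $(k,\ldots,k)$ reads $\frac{1}{k}(y_1+\cdots+y_n)\geq 1$, which rearranges directly to $y_1+\cdots+y_n\geq k$ for every $(y_1,\ldots,y_n)\in IP(\m_n^d)$. Taking infima yields $\widetilde{\alpha}(\m_n^d)\geq k$.

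For the matching upper bound I will exhibit the symmetric candidate $\bp=(k/n,\ldots,k/n)$, whose coordinates sum to $k$, and verify that it lies in $IP(\m_n^d)$. The bounding inequality associated to an arbitrary $(a_1,\ldots,a_n)\in P_n(d+n-1)$ evaluated at $\bp$ becomes
\[
\frac{k}{n}\left(\frac{1}{a_1}+\cdots+\frac{1}{a_n}\right)\geq 1,
\]
which is equivalent to the classical arithmetic-harmonic mean inequality $\sum_{i=1}^{n}\frac{1}{a_i}\geq \frac{n^2}{\sum_{i=1}^n a_i}=\frac{n^2}{d+n-1}=\frac{n}{k}$. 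Once $\bp$ is feasible, $\widetilde{\alpha}(\m_n^d)\leq k$ follows, and combining the two bounds yields the asserted equality.

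I do not anticipate a substantive obstacle: the whole argument rests on recognizing that the balanced partition produces a tight bounding hyperplane which is simultaneously realized at the symmetric point via AM--HM. The genuine difficulty will only surface when $d\not\equiv 1\pmod n$, since then the balanced partition is no longer integral and a perturbation argument will be required in place of this clean symmetric optimizer.
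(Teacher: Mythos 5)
Your proposal is correct and follows essentially the same route as the paper's proof: both obtain the lower bound from the bounding hyperplane corresponding to the balanced partition $(k,\ldots,k)\in P_n(d+n-1)$, and both obtain the matching upper bound by verifying via the AM--HM inequality that the symmetric point $(k/n,\ldots,k/n)$ lies in $IP(\m_n^d)$.
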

\begin{proof}
If $d\equiv 1\mod n$, then $d+n-1$ is an integer multiple of $n$; in other words, $\frac{d+n-1}{n}$ is an integer, say $m$. The ideal $(x_1^m,\dots,x_n^m)$ is in the irreducible decomposition of $\mathfrak{m}_n^d$ by \Cref{homomaxidealQdecomp}. The bounding inequality corresponding to $NP(x_1^m,\dots,x_n^m)$ 
$$
\frac{1}{m}y_1+\cdots+\frac{1}{m}y_n\geq 1
\quad\Rightarrow\quad
y_1+\cdots+y_n\geq m
$$
indicates that $\widetilde{\alpha}(\mathfrak{m}_n^d)\geq m$. Consider the vector $(\frac{m}{n},\dots,\frac{m}{n})$ in $\mathbb{R}^n$ that clearly has  sum of coordinates $m$. For each component $(x_1^{a_1},\dots,x_n^{a_n})$ in the irreducible decomposition there is an identity
$$
\begin{aligned}
\frac{1}{a_1}\left(\frac{m}{n}\right)+\cdots+\frac{1}{a_n}\left(\frac{m}{n}\right)
&=\frac{m}{n}\sum_{i=1}^n \frac{1}{a_i}.
\end{aligned}
$$
The value $\frac{1}{n}\sum_{i=1}^n \frac{1}{a_i}$ is the inverse of the harmonic mean of the set $a_1,\dots,a_n$ and the arithmetic mean for this set is $m$. Hence the inequality relating these means yields
$$
\frac{1}{a_1}\left(\frac{m}{n}\right)+\cdots+\frac{1}{a_n}\left(\frac{m}{n}\right)
\geq m\left(\frac{1}{m}\right)= 1.
$$
Therefore the point $(\frac{m}{n},\dots,\frac{m}{n})$ is part of the Newton polyhedron of each irreducible component of $\mathfrak{m}_n^d$, i.e., $(\frac{m}{n},\dots,\frac{m}{n})\in IP(\mathfrak{m}_n^d)$. Since it was shown before that the least value of the sum of coordinates of points in this polyhedron is at least $m$, and the point identified above has  sum of coordinates exactly $m$, we conclude that $\widetilde{\alpha}(\mathfrak{m}_n^d)= m$.
\end{proof}

\begin{rem}
\label{rem:mod1formula}
The right hand side in the equality displayed in \Cref{mod1formula} matches the Chudnovsky lower bound $\frac{\alpha(\m_n)+\bight(\m_n)-1}{\bight(\m_n)}$; see \Cref{conj:Chudmon}.
\end{rem}

Before we continue our analysis, we state a simple fact that will become useful later. The proof is omitted, since it is a direct verification.
\begin{lem}
\label{recipsum}
If $x,y\in\mathbb{R}_{>0}$ are such that $x\geq y+1$, then 
$
\frac{1}{x}+\frac{1}{y}\geq\frac{1}{x-1}+\frac{1}{y+1}.
$
\end{lem}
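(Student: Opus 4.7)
The plan is to reduce the claimed inequality to a short polynomial comparison through elementary algebra. First I would rearrange by grouping the $y$-terms on one side and the $x$-terms on the other, obtaining
\[
\frac{1}{y}-\frac{1}{y+1}\;\geq\;\frac{1}{x-1}-\frac{1}{x}.
\]
Combining each side over a common denominator collapses these to $\frac{1}{y(y+1)}\geq\frac{1}{x(x-1)}$. Provided the denominators are positive (which requires $x>1$ and $y>0$, implicitly built into the statement so that every quantity displayed is defined), cross-multiplying reduces the claim to the polynomial inequality $x(x-1)\geq y(y+1)$.

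The second step is to factor the difference:
\[
x(x-1)-y(y+1)=x^2-y^2-(x+y)=(x+y)(x-y-1).
\]
Since $x,y\in\mathbb{R}_{\geq 0}$ gives $x+y\geq 0$, the polynomial inequality is equivalent to the sign condition $x-y-1\geq 0$, that is, $x\geq y+1$. This is the hypothesis of the lemma, interpreted in the form $x\geq y+1$; note that the weaker inequality $x\geq y-1$ alone is not enough, since for instance $x=y=2$ satisfies it yet $\tfrac12+\tfrac12=1<\tfrac43=\tfrac11+\tfrac13$.

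The argument presents essentially no obstacle; it is a two-step algebraic identity followed by a sign check. The only point deserving attention is the well-definedness of the denominators $x-1$ and $y$, which is automatic once $y>0$ and $x\geq y+1$ are in force. For that reason the lemma should be viewed as an easy but useful local exchange principle: sliding one unit of mass from the larger coordinate $x$ to the smaller coordinate $y$ (as long as $x$ stays at least $y+1$) only decreases the sum of reciprocals, which is exactly the feature needed when perturbing candidate feasible points for the linear program defining $\widetilde{\alpha}(\mathfrak{m}_n^d)$.
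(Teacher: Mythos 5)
Your reduction is correct: rearranging to $\frac{1}{y}-\frac{1}{y+1}\geq\frac{1}{x-1}-\frac{1}{x}$, combining each side to $\frac{1}{y(y+1)}\geq\frac{1}{x(x-1)}$, cross-multiplying, and factoring $x(x-1)-y(y+1)=(x+y)(x-y-1)$ is exactly the direct verification the paper alludes to (it omits the proof). You are also right that the hypothesis as printed, $x\geq y-1$, is a typo for $x\geq y+1$: your counterexample $x=y=2$ settles this, since $\frac12+\frac12=1<\frac{4}{3}=\frac{1}{1}+\frac{1}{3}$. In the only place the lemma is invoked, namely the proof of the proposition about balanced partitions, it is applied with $x=\max\{a_i,a_j\}$ and $y=\min\{a_i,a_j\}$ where $|a_i-a_j|>1$, so $x\geq y+2\geq y+1$ and the corrected hypothesis is satisfied; thus the downstream argument is unaffected. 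Good catch.
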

An interesting consequence of the above lemma is presented below.
\begin{prop}
\label{cor:recipineq}
Fix an integer $s>0$. The minimum value of the function $f(\ba)=\frac{1}{a_1}+\cdots + \frac{1}{a_n}$, where the tuple $\ba=(a_1,\ldots, a_n)$ ranges over $P_n(s)$ is attained by a partition where the parts differ by at most one, that is, $|a_i-a_j|\leq 1$ for all $1\leq i<j\leq n$.
\end{prop}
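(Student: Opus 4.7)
My plan is to prove the proposition by an exchange argument, coupled with a secondary potential function to force the procedure to terminate. First, I will note that $P_n(s)$ is a finite set, so $f$ attains its minimum on $P_n(s)$. Among all minimizers of $f$, I will then select one, call it $\ba=(a_1,\ldots,a_n)$, that additionally has the smallest value of the secondary potential $\sigma(\ba):=\sum_{k=1}^n a_k^2$. The goal is to show that any such $\ba$ must satisfy $|a_i-a_j|\leq 1$ for all $i,j$.

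To prove this, I will argue by contradiction, assuming that there exist indices $i\neq j$ with $a_i-a_j\geq 2$. Performing the classical flattening swap, I set $a_i'=a_i-1$, $a_j'=a_j+1$, and $a_k'=a_k$ otherwise, obtaining a new tuple $\ba'$. Then $\ba'\in P_n(s)$, since the total sum is preserved and each entry remains at least $1$ (using $a_j+1\geq 2$ and $a_i-1\geq a_j+1\geq 2$). Lemma \ref{recipsum}, applied with $x=a_i$ and $y=a_j$ (whose hypothesis is trivially satisfied since $a_i\geq a_j+2$), gives $\frac{1}{a_i}+\frac{1}{a_j}\geq\frac{1}{a_i-1}+\frac{1}{a_j+1}$, and so $f(\ba)\geq f(\ba')$. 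Since $\ba$ is a minimizer, $\ba'$ must also be a minimizer of $f$ on $P_n(s)$.

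Finally, a direct computation shows
\[
\sigma(\ba')-\sigma(\ba)=(a_i-1)^2+(a_j+1)^2-a_i^2-a_j^2=2(a_j+1-a_i)<0,
\]
since $a_i-a_j\geq 2$. This strictly decreases the secondary potential while preserving minimality of $\ba'$, contradicting the choice of $\ba$. Thus all parts of $\ba$ differ by at most one, completing the argument. I do not anticipate any serious obstacle; the only delicate point is that Lemma \ref{recipsum} only yields $f(\ba)\geq f(\ba')$, not a strict inequality, which is precisely why the auxiliary potential $\sigma$ is needed to rule out a cycle among equal-value swaps.
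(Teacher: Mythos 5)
Your proof is correct and uses essentially the same exchange argument as the paper: applying \Cref{recipsum} to a pair of parts whose difference exceeds one to produce a more balanced partition with no larger value of $f$. The auxiliary potential $\sigma(\ba)=\sum_k a_k^2$ is a welcome refinement that makes the termination of the balancing process rigorous (a point the paper leaves implicit), but the underlying route is the same.
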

\begin{proof}
The result follows by noticing that modifying a partition in a manner that decreases the difference between the parts results in an increase of the objective function $f$. Indeed,  \Cref{recipsum} insures that if $(a_1,\ldots, a_n)\in P_n(s)$ has two parts $a_i, a_j$ such that $|a_i-a_j|>1$, then the partition $(a'_1,\ldots, a'_n)\in P_n(s)$ obtained by setting $a'_k=a_k$ whenever $k\not\in \{i,j\}$, $a'_i=\max\{a_i,a_j\}-1$, $a'_j=\min\{a_i,a_j\}+1$ satisfies
\[ f(\ba)=\sum_{\ell=1}^n \frac{1}{a_\ell} \geq \sum_{\ell=1}^n \frac{1}{a'_\ell} =f(\ba'). \]
\end{proof}


Now we turn to the determination of $\widetilde{\alpha}(\mathfrak{m}_n^d)$ for arbitrary values of $d$.

\begin{thm}
\label{modkformula}
Suppose $d$ is a positive integer and $d-1\equiv k\mod n$, $0\leq k<n$. Then 
the following equality holds
\[
\widetilde{\alpha}(\mathfrak{m}_n^d)
=\frac{(n+d-1-k)(2n+d-1-k)}{n(2n+d-1-2k)}.
\]
\end{thm}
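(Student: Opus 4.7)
The plan is to exploit the permutation symmetry of $IP(\m_n^d)$ to reduce the linear program of \Cref{cor:naiveWaldschmidtLP} to a one-parameter optimization, which then boils down to identifying the optimal partition in $P_n(d+n-1)$.

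First, I would observe that the constraint set in \Cref{cor:boundingIPmnd} is invariant under permutations of coordinates: for every $(a_1,\dots,a_n)\in P_n(d+n-1)$ and every $\sigma\in S_n$, the tuple $(a_{\sigma(1)},\dots,a_{\sigma(n)})$ is also in $P_n(d+n-1)$. Hence $IP(\m_n^d)$ is $S_n$-invariant. Given any $\by\in IP(\m_n^d)$, averaging over the $S_n$-orbit of $\by$ produces the symmetric point $(\bar y,\dots,\bar y)$ with $\bar y=(y_1+\cdots+y_n)/n$, which lies in $IP(\m_n^d)$ by convexity and has the same sum of coordinates. So the minimum of $y_1+\cdots+y_n$ on $IP(\m_n^d)$ is attained at a symmetric point $(y,\dots,y)$. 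The constraint indexed by $\ba\in P_n(d+n-1)$ then reads $y\sum_{i=1}^n 1/a_i\geq 1$, so
\[
\widetilde{\alpha}(\m_n^d)=\frac{n}{\displaystyle\min_{\ba\in P_n(d+n-1)}\sum_{i=1}^n 1/a_i}.
\]

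Next, I would invoke \Cref{cor:recipineq} to conclude that the minimum of $\sum 1/a_i$ is attained at a most balanced partition. Since $d-1\equiv k\pmod{n}$, one can write $d+n-1=qn+k$ with $q=(d+n-1-k)/n$ a positive integer. The balanced partition then has $k$ parts equal to $q+1$ and $n-k$ parts equal to $q$, and a direct computation yields
\[
\min_{\ba\in P_n(d+n-1)}\sum_{i=1}^n\frac{1}{a_i}=\frac{k}{q+1}+\frac{n-k}{q}=\frac{n(q+1)-k}{q(q+1)}.
\]
Substituting back gives $\widetilde{\alpha}(\m_n^d)=nq(q+1)/(n(q+1)-k)$. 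The final step is to unwind the identities $nq=n+d-1-k$, $n(q+1)=2n+d-1-k$, and $n(q+1)-k=2n+d-1-2k$ to reproduce the claimed formula.

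The main obstacle is the symmetry reduction; this follows cleanly from convexity combined with $S_n$-invariance of the constraint set, but without it the problem would be a full $n$-variable LP over a polyhedron with exponentially many facets. After the reduction, the combinatorial optimization is controlled by \Cref{cor:recipineq} and the algebraic rearrangement is routine; the boundary case $k=0$ is consistent with \Cref{mod1formula}.
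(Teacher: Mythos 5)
Your proof is correct and ultimately rests on the same two ingredients as the paper's: the permutation symmetry of $P_n(d+n-1)$ and \Cref{cor:recipineq} to identify the balanced partition as the minimizer of $\sum 1/a_i$. The packaging differs in a useful way. The paper argues in two separate directions: for the lower bound it sums the facet inequalities of $IP(\m_n^d)$ over the $\binom{n}{k}$ permutations of the balanced partition, and for the upper bound it exhibits the symmetric point $(\beta/n,\dots,\beta/n)$ and checks feasibility against all constraints via \Cref{cor:recipineq}. You instead use convexity together with $S_n$-invariance to argue up front that the LP optimum is attained on the diagonal $\{(y,\dots,y)\}$, collapsing the $n$-variable LP to the one-parameter problem $\widetilde{\alpha}(\m_n^d)=n/\min_{\ba}\sum_i 1/a_i$, after which \Cref{cor:recipineq} finishes both directions at once. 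This symmetrization-of-the-feasible-point step is a clean, standard move that the paper does not make explicit, and it makes the structure of the argument more transparent; the paper's version, by contrast, makes explicit which facet inequalities are actually tight. Both are valid, and the algebraic unwinding at the end agrees.
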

\begin{proof}
First, if $k=0$, then the formula becomes
$$
\widetilde{\alpha}(\mathfrak{m}_n^d)=
\frac{(2n+d-1-0)(n+d-1-0)}{n(2n+d-1-2(0))}
=\frac{n+d-1}{n},
$$
which is in accordance with \Cref{mod1formula}. Therefore, let us consider the case $k>0$.

Let $a=\left\lceil \frac{n+d-1}{n}\right\rceil$ and $b=\left\lfloor \frac{n+d-1}{n}\right\rfloor$, with explicit expressions$$
a=\frac{n+d-1+(n-k)}{n},\qquad b=\frac{n+d-1-k}{n}.
$$
Thus $a$ and $b$ are positive integers. We define the \textit{balanced} partition of $n+d-1$ as the unordered $n$-tuple where $k$ of the elements are $a$ and $n-k$ of the elements are $b$. This partition is in $P_n(n+d-1)$ since these elements sum to $n+d-1$. 


Consider now the components of the irreducible decomposition \eqref{eq:irreddecomp} corresponding to permutations of this balanced partition. There are ${n\choose k}$ such irreducible components; each corresponds to a permutation $\sigma$ in the symmetric group on $n$ elements in the following way $$
J_\sigma=(x_{\sigma(1)}^a,\ldots, x_{\sigma(k)}^a,x_{\sigma(k+1)}^b,\ldots,x_{\sigma(n)}^b).
$$
There are multiple permutations $\sigma$ which give the same  irreducible component $J_\sigma$.
The bounding inequality for $IP(\m_n^d)$ corresponding to the component $J_\sigma$ is
$$
\frac{1}{a}\left(y_{\sigma(1)}+\cdots+y_{\sigma(k)}\right)+\frac{1}{b}\left(y_{\sigma(k+1)}+\cdots+y_{\sigma(n)}\right)\geq 1
$$
Summing up these inequalities for all the distinct ideals $J_\sigma$ and utilizing the symmetry of the coefficients yields
%
\[
(y_1+y_2+\cdots+y_n)\left({n-1\choose k-1}\frac{1}{a}+{n-1\choose k}\frac{1}{b}\right) \geq{n\choose k}
\]
whence we deduce that  any point $\by=(y_1,\ldots, y_n)\in IP(I)$ satisfies
\[
y_1+y_2+\cdots+y_n\geq \frac{{n\choose k}}{{n-1\choose k-1}\frac{1}{a}+{n-1\choose k}\frac{1}{b}} =\frac{(n+d-1-k)(2n+d-1-k)}{n(2n+d-1-2k)}
:=\beta.
\]
From \Cref{cor:naiveWaldschmidtLP} we now deduce the inequality
$
\widetilde{\alpha}(\mathfrak{m}_n^d)\geq\beta.
$

Next consider the vector $\widetilde{\by}\in \R^n$ having each component  $\widetilde{y}_i=\beta/n$. We show that $\widetilde{\by}\in IP(\m_n^d)$ by verifying that this vector satisfies the bounding inequalities in \Cref{cor:boundingIPmnd}.  Given $(a_1,\ldots, a_n)\in P_n(d+n-1)$ there is an equality
\[
\frac{1}{a_1}\widetilde{y}_1+\cdots+\frac{1}{a_n}\widetilde{y}_n=\left(\frac{1}{a_1}+\cdots+\frac{1}{a_n}\right)\cdot \frac{\beta}{n}\]
and by \Cref{cor:recipineq} we can compare the sum of the reciprocals for the partition $(a_1,\ldots, a_n)$ to that of the balanced partition as follows
\[ \frac{1}{a_1}+\cdots+\frac{1}{a_n}\geq k\cdot \frac{1}{a}+(n-k)\cdot \frac{1}{b}=\frac{n^2(2n+d-1-2k)}{(n+d-1-k)(2n+d-1-k)}= \frac{n}{\beta}.\]
Altogether, the previous two displayed equations yield the inequality
\[
\frac{1}{a_1}\widetilde{y}_1+\cdots+\frac{1}{a_n}\widetilde{y}_n\geq \frac{n}{\beta}\cdot \frac{\beta}{n}=1.
\]
%
%
Since we have shown $\widetilde{\by}$ satisfies the bounding inequalities for the irreducible polyhedron of $\mathfrak{m}_n^d$, it follows that  $\widetilde{\by}\in IP(\m_n^d)$ and thus 
\[
\widetilde{\alpha}(\mathfrak{m}_n^d)\leq \widetilde{y}_1+\cdots +\widetilde{y}_n=\beta=\frac{(n+d-1-k)(2n+d-1-k)}{n(2n+d-1-2k)}. 
\]
Together with the opposite inequality proven above this finishes the proof.

%

\end{proof}

We give a lower bound that extends \Cref{rem:mod1formula}.

\begin{cor}
\label{cor:tight}
Let $d,n$ be positive integers. Then the following inequality holds
\[
\widetilde{\alpha}(\m_n^d)\geq \left\lfloor \frac{d+n-1}{n}\right\rfloor,
\]
with equality taking place if and only if $d\equiv 1 \pmod{n}$.
\end{cor}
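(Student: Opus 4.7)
The plan is to deduce both the stated inequality and the equality case directly from the explicit formula for the naive Waldschmidt constant of $\m_n^d$ established in \Cref{modkformula}, combined with the containment chain $\widetilde{\alpha}(I)\le \widehat{\alpha}(I)$ from \Cref{prop:ineq}. First I would unwind the floor: writing $d-1=an+k$ with $0\le k<n$ as in \Cref{modkformula}, a direct calculation gives $\lfloor (d+n-1)/n\rfloor = a+1$, and this integer is precisely the quantity $b=(n+d-1-k)/n$ that already appears in the proof of that theorem. Hence the target bound is simply $\widetilde{\alpha}(\m_n^d)\ge b$.

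Next I would rearrange the formula from \Cref{modkformula} in the form
\[
\widetilde{\alpha}(\m_n^d)\;=\;\frac{n+d-1-k}{n}\cdot\frac{2n+d-1-k}{2n+d-1-2k}\;=\;b\cdot \frac{2n+d-1-k}{2n+d-1-2k}.
\]
Since $k\ge 0$, the second factor is at least $1$, with equality exactly when $k=0$. This yields $\widetilde{\alpha}(\m_n^d)\ge b$, and therefore $\widehat{\alpha}(\m_n^d)\ge b$ by \Cref{prop:ineq}, with equality forcing $k=0$, i.e.\ $d\equiv 1\pmod n$. The converse direction of the equivalence is covered by \Cref{mod1formula}: for $d\equiv 1\pmod n$ the naive Waldschmidt constant equals the integer $b$, matching $\lfloor(d+n-1)/n\rfloor$ exactly (cf.\ \Cref{rem:mod1formula}).

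The whole argument is essentially an algebraic rearrangement of a formula that has already been proved, so I do not anticipate any substantive obstacle. The only piece of bookkeeping worth double-checking is the identification $\lfloor (d+n-1)/n\rfloor = b$ from the division-with-remainder $d-1=an+k$, but this is immediate from $n+d-1-k=(a+1)n$.
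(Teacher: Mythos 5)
Your algebraic rearrangement is exactly the paper's intended (and far more terse) verification: write $\widetilde{\alpha}(\m_n^d)=b\cdot\frac{2n+d-1-k}{2n+d-1-2k}$ with $b=\lfloor(d+n-1)/n\rfloor$, then note the second factor is $\geq 1$ with equality iff $k=0$; the paper's one-line proof is the same inequality after cancelling $(n+d-1-k)/n$. One thing worth flagging, which your argument implicitly uncovers: the corollary as printed uses $\widehat{\alpha}$, but since $\m_n^d$ is $\m_n$-primary its symbolic and ordinary powers coincide, so $\widehat{\alpha}(\m_n^d)=d$, and for that invariant the stated ``if and only if'' is false (equality $d=\lfloor(d+n-1)/n\rfloor$ requires $d=1$ or $n=1$, not merely $d\equiv 1\pmod n$). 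The statement, the paper's proof, and your converse direction via \Cref{mod1formula} all concern $\widetilde{\alpha}(\m_n^d)$; your derivation is a complete and correct proof of that intended version, but be aware that the converse step only yields $\widetilde{\alpha}(\m_n^d)=b$ and would not establish $\widehat{\alpha}(\m_n^d)=b$ if the statement were taken literally.
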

\begin{proof}
In view of \Cref{modkformula}, setting $d-1\equiv k\pmod{n}$ where $0\leq k\leq n-1$, the claim is equivalent to the following easily verified inequality
\[
\frac{(n+d-1-k)(2n+d-1-k)}{n(2n+d-1-2k)}\geq \frac{d+n-1-k}{n}=\left\lfloor \frac{d+n-1}{n}\right\rfloor.
\]
\end{proof}

In view of the result above, we make a conjecture regarding the naive Waldschmidt constant that parallels \Cref{conj:Chudmon}.

\begin{conj}
\label{conj:weakChudnovski}
Let $I$ be a monomial ideal. Then the following  inequality holds
\[
\widetilde{\alpha}(I)\geq \left\lfloor \frac{\alpha(I)+\bight(I)-1}{\bight(I)}\right\rfloor.
\]
\end{conj}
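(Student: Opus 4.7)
The plan is to strengthen the iterative argument behind Theorem~\ref{SkodaTypeBound}, since the resulting Skoda-type estimate $\widetilde{\alpha}(I)\geq \alpha(I)/\bight(I)$ already differs from the conjectured bound only by a floor correction. Setting $d=\alpha(I)$ and $e=\bight(I)$, the desired inequality is equivalent to $\widetilde{\alpha}(I)\geq \lceil d/e \rceil$, so the task is to upgrade a fractional lower bound to the next integer.

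First, I would revisit the Skoda extraction of coordinates $t_{k_1},\ldots,t_{k_m}$ with $t_{k_j}\geq a_{k_j}/e$ and $\sum_j a_{k_j}\geq d$. Because the positive integers $a_{k_j}$ are aggregated along inequalities drawn from distinct irreducible components $J_i$, there are combinatorial constraints on how they can be distributed. I would study these constraints with the aim of forcing the accumulated sum $\sum_j a_{k_j}$ to be at least $e\lceil d/e\rceil$ rather than merely $\geq d$, perhaps by continuing the extraction one step further whenever $\sum_j a_{k_j}$ lies strictly between $d$ and $e\lceil d/e\rceil$, or by arranging at each iteration that the newly selected component contributes a weight congruent to the deficit modulo $e$.

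As a parallel strategy I would attempt a reduction to the universal case of powers of the maximal ideal, using the closed formula for $\widetilde{\alpha}(\mathfrak{m}_e^d)$ from Theorem~\ref{modkformula} together with the corollary at the end of the preceding subsection. The natural target inequality is $\widetilde{\alpha}(I)\geq \widetilde{\alpha}(\mathfrak{m}_e^d)$, which immediately implies the conjecture. To establish it, I would fix a prime $P\in \Ass(I)$ of height $e$, project $IP(I)$ onto the coordinates corresponding to $P$, and compare this projection with $IP(\mathfrak{m}_e^d)$ via Lemma~\ref{lem:refined} applied to a refinement of the decomposition of $Q_{\subseteq P}$ by pieces of the universal decomposition of $\mathfrak{m}_e^d$ in \Cref{homomaxidealQdecomp}. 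Corollary~\ref{cor:recipineq}, which quantifies how balancing the parts of a partition minimizes a sum of reciprocals, appears to be the right combinatorial lever to control this comparison.

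The hardest step is the integer lift from the fractional bound $d/e$ to its ceiling. This is a genuine integrality phenomenon: the linear program computing $\widetilde{\alpha}(I)$ is solved at rational vertices whose coordinate sums need not be integers, so the improvement cannot arise from convex geometry alone. One must instead exploit the integrality of the exponents $a_{ij}$ that define the facets of $IP(I)$ in Lemma~\ref{rem:eqNP(Q)}. Moreover, the example $I=(x^2,xy,y^2)$, for which $\widetilde{\alpha}(I)=4/3$, shows that the direct Chudnovsky inequality without the floor already fails, so any successful argument must be just tight enough to capture the floor while still accommodating such borderline cases. A direct monotonicity argument via ideal containment $I\subseteq \mathfrak{m}_n^d$ is ruled out because $IP$ is not monotone in $I$: indeed $(x^2,y^2)\subseteq \mathfrak{m}_2^2$ but $IP(x^2,y^2)\subsetneq IP(\mathfrak{m}_2^2)$. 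This suggests that the comparison must be made after projecting to the coordinates of a height-$e$ prime, which is precisely where I expect the principal technical difficulty to lie.
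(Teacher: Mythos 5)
This statement is presented in the paper only as a conjecture, and the paper does not prove it in full generality. What the paper does establish (\Cref{LowerBoundThm}, via \Cref{homomaxhasleastalpha} and the monotonicity lemma \Cref{lem:homomaxhasleastalpha}) is the weaker bound $\widetilde{\alpha}(I)\geq\left\lfloor\frac{\alpha(I)+n-1}{n}\right\rfloor$ with the ambient number of variables $n$ in place of $\bight(I)$; this settles \Cref{conj:weakChudnovski} only when $\bight(I)=n$, i.e.\ when $\mathfrak{m}_n\in\Ass(I)$.

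Your sketch also contains a concrete error that causes you to reject the very mechanism the paper uses for the case it does handle. You dismiss ``a direct monotonicity argument via ideal containment $I\subseteq\mathfrak{m}_n^d$'' on the grounds that $IP$ is not monotone in $I$, citing $(x^2,y^2)\subseteq\mathfrak{m}_2^2$ together with $IP(x^2,y^2)\subsetneq IP(\mathfrak{m}_2^2)$. But this proper containment is exactly what monotonicity predicts --- a smaller ideal has a smaller irreducible polyhedron --- so your example confirms rather than refutes it. Indeed \Cref{lem:homomaxhasleastalpha} proves $I\subseteq I'\Rightarrow IP(I)\subseteq IP(I')$, and that is precisely what gives $\widetilde{\alpha}(I)\geq\widetilde{\alpha}(\mathfrak{m}_n^d)$ and hence the partial result. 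The genuine difficulty for the full conjecture is not a failure of monotonicity but that $I\not\subseteq\mathfrak{m}_e^d$ when $e=\bight(I)<n$, so the monotonicity lemma cannot be applied with $\mathfrak{m}_e^d$ in place of $\mathfrak{m}_n^d$. Your proposal to project $IP(I)$ onto the coordinates of a height-$e$ associated prime and compare with $IP(\mathfrak{m}_e^d)$ is a plausible way to bypass this obstruction, and it would prove the full conjecture if carried through, but you leave the key comparison step as a plan rather than establishing it (and it is not clear the projection lands inside $IP(\mathfrak{m}_e^d)$ in general); the paper likewise leaves this open. Your first strategy, pushing the Skoda-type extraction to accumulate weight $e\lceil d/e\rceil$ rather than $d$, is similarly only sketched and would require a nontrivial combinatorial argument that is not supplied.
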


In \Cref{LowerBoundThm} we prove this conjecture for the case when $I$ has maximum possible big-height, namely $\bight(I)=n$. The importance of determining the value of the naive Waldschmidt constant for the powers of the homogeneous maximal ideal earlier in this section becomes apparent in the next result because this provides lower bounds for the naive Waldschmidt constant of arbitrary ideals.

\begin{thm}
\label{homomaxhasleastalpha}
Let $I$ be a monomial ideal in $K[x_1,\ldots,x_n]$ with $\alpha(I)=d$. Then the  inequality $\widetilde{\alpha}(\mathfrak{m}_n^d)\leq\widetilde{\alpha}(I)$  holds.
\end{thm}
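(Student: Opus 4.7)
The plan is to establish the containment of convex regions $IP(I)\subseteq IP(\m_n^d)$; since by \Cref{cor:naiveWaldschmidtLP} both $\widetilde{\alpha}(I)$ and $\widetilde{\alpha}(\m_n^d)$ are computed by minimizing $y_1+\cdots+y_n$ over the respective polyhedron, a smaller feasible region yields a larger minimum, and this containment immediately gives $\widetilde{\alpha}(I)\geq \widetilde{\alpha}(\m_n^d)$. Writing $I=J_1\cap\cdots\cap J_s$ for the monomial irreducible decomposition of $I$ and appealing to the irreducible decomposition of $\m_n^d$ from \Cref{homomaxidealQdecomp}, the explicit bounding inequalities of \Cref{rem:eqNP(Q)} reduce the task to the following: for every $(a_1,\ldots,a_n)\in P_n(d+n-1)$, derive the inequality $\sum_{i=1}^n y_i/a_i\geq 1$ from the bounding inequalities defining $IP(I)$ together with the nonnegativity constraints $y_i\geq 0$.

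The key idea is a degree-counting trick that selects the right irreducible component of $I$ for each partition. Fix $(a_1,\ldots,a_n)\in P_n(d+n-1)$ and consider the monomial $\bx^{\ba-\mathbf{1}}=x_1^{a_1-1}\cdots x_n^{a_n-1}$, which is well defined because $a_i\geq 1$ for each $i$. Its total degree equals $(d+n-1)-n=d-1<\alpha(I)$, so $\bx^{\ba-\mathbf{1}}\notin I$. Hence there exists an irreducible component $J_j=(x_{i_1}^{b_1},\ldots,x_{i_h}^{b_h})$ of $I$ that does not contain this monomial, and comparing exponents this forces $b_k\geq a_{i_k}$ for each $k=1,\ldots,h$. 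For any $y\in IP(I)$, the bounding inequality associated with $J_j$ reads $\sum_{k=1}^h y_{i_k}/b_k\geq 1$; combining this with the exponent bounds $b_k\geq a_{i_k}$ and nonnegativity yields the chain
\[
\sum_{i=1}^n \frac{y_i}{a_i} \;\geq\; \sum_{k=1}^h \frac{y_{i_k}}{a_{i_k}} \;\geq\; \sum_{k=1}^h \frac{y_{i_k}}{b_k} \;\geq\; 1,
\]
which is precisely the bounding inequality for the component $(x_1^{a_1},\ldots,x_n^{a_n})$ of $\m_n^d$.

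I do not foresee a serious obstacle in this argument; the conceptual crux is spotting the witness monomial $\bx^{\ba-\mathbf{1}}$, whose degree falls just short of $\alpha(I)$ by exactly the amount needed to guarantee existence of an irreducible component of $I$ with exponents dominating those of $(x_1^{a_1},\ldots,x_n^{a_n})$. The remainder is a routine comparison of linear inequalities, very much in the spirit of the extraction of coordinate-wise lower bounds used in the proof of the Skoda-type bound \Cref{SkodaTypeBound}.
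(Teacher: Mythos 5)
Your proof is correct, and it takes a genuinely different route from the paper's. The paper first proves a general monotonicity lemma (\Cref{lem:homomaxhasleastalpha}): for any containment $I\subseteq I'$ of monomial ideals, each minimal element $J'$ of $\Irr(I')$ contains a minimal element of $\Irr(I)$, whence $IP(I)\subseteq IP(I')$ and $\widetilde{\alpha}(I)\geq\widetilde{\alpha}(I')$; the theorem then follows by taking $I'=\mathfrak{m}_n^d$, since $\alpha(I)=d$ gives $I\subseteq\mathfrak{m}_n^d$. You instead establish the containment $IP(I)\subseteq IP(\mathfrak{m}_n^d)$ directly, bypassing the abstract poset argument: using the explicit description of $IP(\mathfrak{m}_n^d)$ from \Cref{homomaxidealQdecomp} and \Cref{cor:boundingIPmnd}, for each partition $\ba\in P_n(d+n-1)$ the witness monomial $\bx^{\ba-\mathbf{1}}$ has degree $d-1<\alpha(I)$ and so escapes some irreducible component $J_j$ of $I$, which forces $J_j\subseteq(x_1^{a_1},\ldots,x_n^{a_n})$ and makes the $\ba$-inequality for $IP(\mathfrak{m}_n^d)$ a consequence of the $J_j$-inequality for $IP(I)$ together with nonnegativity. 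The paper's lemma buys a reusable fact — monotonicity of $\widetilde{\alpha}$ under arbitrary containments of monomial ideals — while your degree-counting argument is more elementary and self-contained for this specific statement, and it is very much in the spirit of the paper's own proofs of \Cref{homomaxidealQdecomp} and \Cref{SkodaTypeBound}.
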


\begin{rem}
The analogue of the above theorem fails when replacing the naive Waldschmidt constant with the Waldschmidt constant. That is, if $\alpha(I)=d$,  the  inequality $\widehat{\alpha}(\mathfrak{m}_n^d)\leq\widehat{\alpha}(I)$ need not hold. This can be seen taking $I=(xy, xz, yz)$, an ideal which satisfies the containment $I\subseteq \m_3^2$, but yields $\widehat{\alpha}(I)=\frac{3}{2}<\widehat{\alpha}(\m_3^2)=2$. 

It is nevertheless true that for square-free monomial ideals $I\subseteq J$ one has $\widehat{\alpha}(I)\geq \widehat{\alpha}(J)$; see \cite[Lemma 3.10]{schweig}. Our proof for \Cref{homomaxhasleastalpha} draws inspiration from this result. Before giving the proof, we require some additional preparation.
\end{rem}

\begin{defn}
\label{def:irr}
For an ideal $I$ denote
$$
\Irr(I):=\{J\,|\,J\text{ is irreducible and } I\subseteq J\}.
$$
\end{defn}

For any monomial ideal $I$, the set $\Irr(I)$ is a partially ordered set with respect to containment which has finitely many minimal elements. Moreover, $J_1, \ldots, J_s$ are the minimal elements of $\Irr(I)$ with respect to containment if and only if $I=J_1\cap \cdots \cap J_s$ is the irredundant irreducible decomposition of $I$.

\begin{lem}
\label{lem:homomaxhasleastalpha}
If $I\subseteq I'$ are monomial ideals, then the following hold:
\begin{enumerate}
\item $\Irr(I')\subseteq \Irr(I)$,
\item if $J'$ is a minimal element of $\Irr(I')$ with respect to containment then there exists a minimal element $J\in \Irr(I)$ with respect to containment such that $J\subseteq J'$, 
\item $\widetilde{\alpha}(I)\geq \widetilde{\alpha}(I')$.
\end{enumerate}
\end{lem}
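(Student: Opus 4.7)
The plan is to prove the three parts in order, using the first to deduce the second, and then combining both with the LP characterization of the naive Waldschmidt constant to obtain the third. Part (1) is a direct set-theoretic observation: if $J$ is irreducible with $I'\subseteq J$, then $I\subseteq I'\subseteq J$ and hence $J\in\Irr(I)$. (The direction of the inclusion in the statement appears to be a typographical flip; the argument naturally gives $\Irr(I')\subseteq \Irr(I)$, which is also the direction required for parts (2) and (3).)

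For part (2), I would use the standard fact that the minimal elements of $\Irr(I)$ under containment are precisely the components $J_1,\ldots,J_s$ of the irredundant irreducible decomposition of $I$, and therefore form a finite set. Given a minimal element $J'\in\Irr(I')$, part (1) supplies $J'\in \Irr(I)$, and the finiteness of the minimal elements of $\Irr(I)$ forces the existence of some minimal $J\in\Irr(I)$ with $J\subseteq J'$.

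For part (3), the plan is to combine \Cref{cor:naiveWaldschmidtLP} with \Cref{lem:refined}. The corollary expresses $\widetilde{\alpha}(I)$ and $\widetilde{\alpha}(I')$ as the minima of $y_1+\cdots+y_n$ over $IP(I)$ and $IP(I')$ respectively, so the desired inequality reduces to proving $IP(I)\subseteq IP(I')$. Writing irredundant irreducible decompositions $I=J_1\cap\cdots\cap J_s$ and $I'=J'_1\cap\cdots\cap J'_t$, part (2) produces for each $j$ some index $i_j$ with $J_{i_j}\subseteq J'_j$; this is precisely the refinement hypothesis of \Cref{lem:refined}. Applying that lemma yields $IP(I)=\bigcap_i NP(J_i)\subseteq \bigcap_j NP(J'_j)=IP(I')$, and taking minima completes the argument. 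The only real thing to get right is the direction of the inclusion of $\Irr$ together with the identification of minimal elements of $\Irr(I)$ with the irredundant components; once these are in hand, parts (2) and (3) follow by straightforward applications of the machinery already assembled in the paper.
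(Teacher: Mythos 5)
Your proposal matches the paper's proof essentially step for step: part (1) is the same direct check from the definition of $\Irr$, part (2) uses the same finiteness-of-minimal-elements argument (the paper phrases it via the set $S=\{J\in\Irr(I)\mid J\subseteq J'\}$ and its minimal element), and part (3) deduces $IP(I)\subseteq IP(I')$ from the refinement supplied by (2) and then applies \Cref{cor:naiveWaldschmidtLP}, where the paper simply re-derives the containment chain rather than citing \Cref{lem:refined} explicitly. You are also right that the inclusion in part (1) as printed is a typographical flip -- the correct direction is $\Irr(I')\subseteq\Irr(I)$, and indeed the paper's own proof of part (2) silently uses that corrected direction when it asserts $J'\in S\subseteq\Irr(I)$.
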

\begin{proof}
The containment $\Irr(I')\subseteq \Irr(I)$ follows from \Cref{def:irr} and the fact that $I\subseteq I'$.

Suppose $J'$ is minimal in $\Irr(I')$. Consider the set $S=\{J\in \Irr(I)\,|\,J\subseteq J'\}$. This is a non-empty subset of $\Irr(I)$ since $J'\in S$. Thus it has a minimal element with respect to containment, let's call it $J$. Moreover, since $S$ is a lower interval of the poset $\Irr(I)$, we deduce that $J$ is in fact a minimal element of $\Irr(I)$. 

Now let $I=J_1\cap \cdots \cap J_s$ and $I'=J'_1\cap \cdots \cap J'_t$ be the irredundant irreducible decompositions for $I$ and $I'$ respectively. From the second assertion of this lemma, for every  $j\in\{1,2,\ldots, t\}$ there exists an $i_j\in\{1,\ldots, s\}$ such that $J_{i_j}\subseteq J'_j$. From this we deduce $NP(J_{i_j})\subseteq NP(J'_j)$ for each $j$ and these containments combine to show the following 
$$
IP(I)=\bigcap_{i=1}^s NP(J_i) \subseteq \bigcap_{j=1}^t NP(J_{i_j})\subseteq  \bigcap_{j=1}^t NP(J'_j)= IP(I').
$$
Having established the containment $IP(I)\subseteq IP(I')$ above, we deduce from this containment and \Cref{cor:naiveWaldschmidtLP} the desired inequality $\widetilde{\alpha}(I)\geq \widetilde{\alpha}(I')$.
\end{proof}

\begin{proof}[Proof of \Cref{homomaxhasleastalpha}]
\Cref{homomaxhasleastalpha} follows from part 3 of \Cref{lem:homomaxhasleastalpha} applied to $I'=\m_n^d$. The containment $I\subseteq I'=\m_n^d$ is ensured by the hypothesis $\alpha(I)=d$.
\end{proof}

The following consequence of \Cref{homomaxhasleastalpha} establishes a lower bound on the naive Waldschmidt constant applicable to  all monomial ideals.

\begin{thm}
\label{LowerBoundThm}
Let $I\subseteq K[x_1,\dots,x_n]$ be a monomial ideal with $\alpha(I)=d$. If $d-1\equiv k\mod(n)$, $0\leq k<n$, then the following inequalities hold
$$ 
\alpha(I)\geq \widehat{\alpha}(I)
\geq \widetilde{\alpha}(I)
\geq\frac{(n+d-1-k)(2n+d-1-k)}{n(2n+d-1-2k)} \geq \left \lfloor \frac{\alpha(I)+n-1}{n}\right \rfloor.
$$
\end{thm}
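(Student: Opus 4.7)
The plan is to assemble the chain of inequalities directly from results already established earlier in the paper; no genuinely new argument is required, only a careful concatenation. The first two inequalities $\alpha(I) \geq \widehat{\alpha}(I) \geq \widetilde{\alpha}(I)$ are exactly the content of \Cref{prop:ineq}, so they need no further work. For the third inequality $\widetilde{\alpha}(I) \geq \frac{(n+d-1-k)(2n+d-1-k)}{n(2n+d-1-2k)}$, I would invoke \Cref{homomaxhasleastalpha}, whose hypothesis $\alpha(I)=d$ is precisely the assumption of the present theorem; this yields $\widetilde{\alpha}(I) \geq \widetilde{\alpha}(\mathfrak{m}_n^d)$, and then \Cref{modkformula} identifies the right-hand side with the stated rational expression in $n$, $d$, and $k$.

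For the final inequality $\frac{(n+d-1-k)(2n+d-1-k)}{n(2n+d-1-2k)} \geq \left\lfloor \frac{\alpha(I)+n-1}{n}\right\rfloor$, I would appeal to the corollary immediately following \Cref{modkformula}, which provides exactly this comparison. Substituting $\alpha(I)=d$ and noting that the residue $k$ satisfies $d+n-1-k=n\lfloor (d+n-1)/n\rfloor$ makes the floor function match the numerator of the rightmost simplified form, so the inequality reduces to the elementary algebraic fact
\[
(n+d-1-k)(2n+d-1-k) \geq (d+n-1-k)(2n+d-1-2k),
\]
which, after expanding and cancelling common terms, is equivalent to $k(d+n-1-k)\geq 0$ — automatic since $0\leq k<n$ and $d\geq 1$.

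I do not expect any real obstacle, since each step is supplied by an earlier named result; the statement functions primarily as a consolidated summary. The only point that needs verbal emphasis is that \Cref{homomaxhasleastalpha} is applied to the pair $I \subseteq \mathfrak{m}_n^d$, a containment guaranteed precisely by the hypothesis $\alpha(I)=d$, which puts every generator of $I$ in degree at least $d$ and hence inside $\mathfrak{m}_n^d$. With this observation in place, the theorem follows by stringing together \Cref{prop:ineq}, \Cref{homomaxhasleastalpha}, \Cref{modkformula}, and the corollary comparing the closed form to $\lfloor(d+n-1)/n\rfloor$.
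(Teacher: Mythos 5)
Your proposal is correct and follows exactly the paper's proof, which consists of stringing together \Cref{prop:ineq}, \Cref{homomaxhasleastalpha}, and \Cref{modkformula} (together with the corollary that follows \Cref{modkformula} for the last inequality). Your explicit algebraic verification that the final inequality reduces to $k(n+d-1-k)\geq 0$ is accurate and adds welcome detail beyond the paper's terse one-line proof.
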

\begin{proof}
This follows from \Cref{prop:ineq}, \Cref{homomaxhasleastalpha} and \Cref{modkformula}.
\end{proof}

The above inequalities establish  the validity of \Cref{conj:weakChudnovski} for monomial ideals $I$ which have the maximal ideal as an associated prime. For this class of ideals,  \Cref{conj:Chudmon} is obviously satisfied as well, since the symbolic and ordinary powers agree and thus $\widehat{\alpha}(I)=\alpha(I)$.

%

\subsection*{Acknowledgements}
The second author was supported by the NSF RTG grant in algebra and combinatorics at the University of Minnesota  DMS--1745638. The sixth author was supported by NSF DMS--2101225. This work was completed in the framework of the 2020 Polymath Jr.~program \href{https://geometrynyc.wixsite.com/polymathreu}{https://geometrynyc.wixsite.com/polymathreu}. The authors gratefully acknowledge the two referees whose comments have helped improve the exposition.

\vspace{1em}

\bibliographystyle{amsalpha}
\bibliography{references}


\end{document}